\documentclass[a4paper,11pt]{amsart}

\usepackage[english]{babel}
\usepackage[latin1]{inputenc}
\usepackage[T1]{fontenc}
\usepackage{fancyhdr}
\usepackage{indentfirst}
\usepackage{amssymb}
\usepackage{amsmath}
\usepackage{amsthm}
\usepackage{mathrsfs}
\usepackage{stmaryrd}
\usepackage{latexsym}
\usepackage{setspace}
\usepackage{mathtools}
\usepackage{etoolbox}
\usepackage{lipsum}

\usepackage[active]{srcltx}
\usepackage{graphicx}
\usepackage{amsmath,amssymb,amsthm,amsfonts}
\usepackage{amssymb}
\numberwithin{equation}{section}

\newtheorem{thm}{Theorem}[section]
\newtheorem{cor}[thm]{Corollary}

\newtheorem{prop}[thm]{Proposition}
\newtheorem{rem}[thm]{Remark}

\newcommand{\bremark}{\begin{rem} \textup}
\newcommand{\eremark}{\end{rem} }

\newcommand{\cuad}{{\sqcap\kern-.68em\sqcup}}

\newcommand{\R}{{\mathbb{R}}}

\newcommand{\KK}{{\mathcal {K}}}

\renewcommand{\rho}{\varrho}
\renewcommand{\theta}{\vartheta}

\begin{document}

\title{BLOW UP OF SOLUTIONS OF SEMILINEAR HEAT EQUATIONS IN GENERAL DOMAINS}

\def\shorttitle{Blow up in general domains}

\author{Valeria Marino}
\address{Dipartimento di Matematica, Universit\`a di Roma "La Sapienza"\\
P.le A. Moro 2\\
00185 Roma, Italy}
\email{valeria.marino.m@gmail.com}

\author{Filomena Pacella}
\address{Dipartimento di Matematica, Universit\`a di Roma "La Sapienza"\\
P.le A. Moro 2\\
00185 Roma, Italy}
\email{pacella@mat.uniroma1.it}
\thanks{F. P.  was partially supported by PRIN 2009-WRJ3W7 grant(Italy).}

\author{Berardino Sciunzi}
\address{Dipartimento di Matematica, UNICAL\\
Ponte Pietro  Bucci 31B \\
87036 Arcavacata di Rende, Cosenza, Italy}
\email{ sciunzi@mat.unical.it}
\thanks{B. S.  was partially supported by ERC-2011-grant: \emph{Epsilon} and PRIN-2011: {\em Var. and Top. Met.}}

\keywords{Semilinear heat equation, finite-time blowup, sign-changing
stationary solutions, linearized operator, asymptotic behavior.}
\thanks{\it 2010 Mathematics Subject
 Classification: 35K91, 35B35, 35B44, 35J91}

\begin{abstract}
Consider the nonlinear heat equation
$v_t -\Delta  v= |v|^{p-1} v$
 in a bounded smooth domain $\Omega\subset \R^n$ with $n>2$ and Dirichlet boundary condition. Given  $u_{p}$   a sign-changing stationary solution fulfilling suitable assumptions, we prove that the solution  with initial value $\theta u_{p} $ blows up in finite time  if $ |\theta -1|>0$ is sufficiently small and if $p$ is sufficiently close to the critical exponent.\\Since for $\theta=1$ the solution is global, this shows that, in general, the set of the initial data for which the solution is global is not star-shaped. This phenomenon had been previously observed in the case when the domain is a ball and the stationary solution is radially symmetric.
 \noindent
\end{abstract}

\maketitle

\section{Introduction}
We consider a nonlinear heat equation of the type
\begin{equation}
\left\{\begin{array}{ll} v_t -\triangle v=|v|^{p-1}v \qquad & \mbox{in }\Omega\times(0,T)\\
                         v=0  \qquad & \mbox{on }\partial\Omega\times(0,T)\\
                         v(0)=v_0(x)  \qquad & \mbox{in }\Omega
       \end{array}\right.\label{eq:probl_parabolico}
\end{equation}
where $\Omega\subset\mathbb{R}^n$, $n\in\mathbb{N}$, is a bounded domain, $p>1$, $T\in(0,+\infty]$ and
\begin{equation*}
v_0\in C_0(\Omega)=\{v\in C(\overline{\Omega}),v(x)=0 \mbox{ for } x\in\partial\Omega\}.
\end{equation*}
It is well known that the initial value problem (\ref{eq:probl_parabolico}) is locally well posed in $C_0(\Omega)$.
Denoting with $T_{v_0}$ the maximal existence time of the solution of (\ref{eq:probl_parabolico}) with initial datum $v_0$, we consider the set of the initial data for which the corresponding solution is global, namely:
\begin{equation}\nonumber
\mathcal{G}=\{v_0\in C_0(\Omega), T_{v_0}=\infty\}\,.
\end{equation}
It is interesting to understand the geometrical properties of the set $\mathcal{G}$.
If we consider $v_0=\theta w$, with $w\in C_0(\Omega)$ and $\theta\in\mathbb{R}$, it is well known that if $|\theta|$ is small enough the solution of $(\ref{eq:probl_parabolico})$ with initial datum $\theta w$, exists globally. Moreover, if $|\theta|$ is sufficiently large, it is easy to see that  the solution blows up in finite time as a consequence of the fact that it has negative energy (see \cite{Levine} and \cite{Ball}).
It is interesting to understand what happens for intermediate values of $\theta$. The case when $w$ is positive is completely clear, as a matter of fact from the maximum principle for the heat equation it follows that there exists $\widetilde\theta>0$ such that if $0<\theta<\widetilde\theta$ then the solution with initial value $\theta\,w$ is globally defined, while if $\theta>\widetilde\theta$ it blows up in finite time. In the borderline case both global existence or  blow up in finite time can occur.

Thus, if we define $\mathcal{G}^+=\{v_0\in\mathcal{G}, v_0\geq 0\}$, we can assert that $\mathcal{G}^+$ is star-shaped with respect to 0 (indeed it is a convex set).
When the initial value changes sign the situation is different and, in general, the set $\mathcal{G}$ may be not star-shaped.
In fact, if we define by $u_p$ a radial sign changing solution of the stationary problem
\begin{equation}
\left\{\begin{array}{ll} -\triangle u_p=|u_p|^{p-1}u_p\qquad & \mbox{in }\Omega\\
                         u_p=0 \qquad & \mbox{on }\partial\Omega\\
       \end{array}\right.\label{eq:probl}
\end{equation}
where $\Omega$ is the unit ball in $\mathbb{R}^n$, with $n>2$ and $p>1$, it has been shown in \cite{C-D-W} that there exists $p^*<p_S$, with $p_S=\frac{n+2}{n-2}$ and there exists $\epsilon >0$ such that if $p^*<p<p_S$ and $0<|1-\theta|<\epsilon$ then $\theta u_p\not\in\mathcal{G}$ i.e. the solution of (\ref{eq:probl_parabolico}), with initial datum $\theta u_p$, blows up in finite time both for $\theta$ slightly greater and slightly smaller than 1.
Hence $\mathcal{G}$ is not star-shaped since $u_p\in\mathcal{G}$.\\
\noindent Recently a similar result has been proved in \cite{D-P-S} in the case when the dimension is two and the exponent $p$ is sufficiently large.

Such a result does not hold in the case $n=1$ (always considering $p>1$).
 As a matter of fact in the one-dimensional case we have that for $|\theta|<1$, $v_{\theta,p}$ (the solution with initial value $\theta\,u_p$) is global and converges uniformly to zero, while it blows up in finite time if $|\theta|>1$.

The proofs of the results of \cite{C-D-W} and \cite{D-P-S} exploit strongly the radial symmetry of the stationary solutions. Hence it is natural to ask whether a similar result holds also in general domains and what kind of sign changing stationary solutions give rise to this phenomenon. Note that this  cannot be true for any
 sign changing stationary solution as it is easy to see considering, for example, a nodal solution in the ball which
 is odd with respect to a symmetry hyperplane and has only two nodal domains.\\
 \noindent Here we show that, in the case when $n>2$ and for exponents close to the critical one, the same blow up phenomenon occurs in any bounded domain  considering a suitable class of  sign changing  solutions $u_p$ of \eqref{eq:probl}.\\
\noindent More precisely we deal with solutions $u_p$ of (\ref{eq:probl}) with the following properties:
\begin{itemize}\label{hp}
\item[(a)]\,\,$\int_\Omega|\triangledown u_p|^2dx\shortrightarrow 2S^{\frac{n}{2}}\qquad$ as $p\shortrightarrow p_S$,
\item[(b)]\,\,$\frac{\max u_p}{\min u_p}\shortrightarrow -\infty\qquad\qquad\,\,\,$ as $p\shortrightarrow p_S$,
\end{itemize}
where $S$ is the best Sobolev constant for the embedding of $H_0^1(\Omega)$ into $L^{2^*}(\Omega)$.
 It has been proved in \cite{P-W} that such solutions  exist,  assuming that $\Omega$ is a bounded smooth domain in $\mathbb{R}^n$ with $n>2$, symmetric with respect to the $x_i$-coordinates ($i=1,\ldots,n$). Later in \cite{M-P} the authors extend the same result to any general bounded and smooth domain in $\mathbb{R}^n$, with $n>2$.
Moreover in  \cite{B-E-P_2007} it has been proved that condition $(a)$ implies that $\Omega\backslash\{x\in\Omega\,|\,u_p(x)=0\}$ has exactly two connected components while, when $n\geq 4$, $(b)$ implies that the nodal surface of $u_p$ does not intersect the boundary $\partial\Omega$ and the positive part  $u^+_p$ and the negative part $u^-_p$ concentrate at at the same point.
 One could easily verify that $(a)$ is equivalent to
\begin{equation*}
E_p(u_p)=\frac{1}{2}\int_\Omega|\triangledown u_p|^2dx-\frac{1}{p+1}\int_\Omega|u_p|^{p+1}dx\shortrightarrow\frac{2}{n}S^{\frac{n}{2}}\qquad\mbox{ as }p\shortrightarrow p_S.\nonumber
\end{equation*}
We refer to \cite{B-E-P_2007} for further properties of such solutions.\\

Our goal is to prove the following theorem.

\newtheorem{theorem}{Theorem}[section]
\begin{theorem}\label{teorema1}
Given  problem (\ref{eq:probl_parabolico}) with $n>2$, $1<p<p_S=\frac{n+2}{n-2}$, and $\Omega$ a bounded smooth domain in $\mathbb{R}^n$, there exists $p^*<p_S$ with the following property: \\ \noindent if $p^*<p<p_S$ and $u_p$ is a sign changing solution of the stationary problem (\ref{eq:probl}) satisfying $(a)$ and $(b)$ then there exist $0<\underline\theta<1<\overline\theta$ such that if $\underline\theta<\theta<\overline\theta$ and $\theta\neq 1$ then $v_{\theta,p}$, solution of (\ref{eq:probl_parabolico}) with initial value $\theta u_p$, blows up in finite time.
\end{theorem}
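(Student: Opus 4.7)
\medskip

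\noindent\textbf{Proof strategy.} The plan is to argue by contradiction: assuming $v_{\theta,p}$ is global for some $\theta\ne 1$ close to $1$, I would combine the Lyapunov structure of \eqref{eq:probl_parabolico} with the concentration information $(a)$--$(b)$ to drive a contradiction, following the scheme of \cite{C-D-W} but replacing radial symmetry by $(a)$--$(b)$. A direct computation using $-\Delta u_p=|u_p|^{p-1}u_p$ gives
\begin{equation*}
E_p(\theta u_p)=\Bigl(\tfrac{\theta^2}{2}-\tfrac{|\theta|^{p+1}}{p+1}\Bigr)\int_\Omega|\nabla u_p|^2\,dx,
\end{equation*}
and a Taylor expansion around $\theta=1$ yields $E_p(\theta u_p)\le E_p(u_p)-c(\theta-1)^2$ with $c>0$ uniform for $p$ close to $p_S$. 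Because $E_p$ is a strict Lyapunov functional along the flow, any hypothetical global orbit starting at $\theta u_p$ would stay forever in the sublevel $\{E_p<E_p(u_p)\}$.

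In the subcritical range $p<p_S$ such an orbit is $L^\infty$-bounded, hence relatively compact in $C_0(\Omega)$, and $\omega(\theta u_p)$ is a nonempty set of stationary solutions of \eqref{eq:probl} of energy $<E_p(u_p)$. By $(a)$, $E_p(u_p)\to\tfrac{2}{n}S^{n/2}$, which by \cite{B-E-P_2007} is the asymptotic infimum of the energy over sign-changing solutions of \eqref{eq:probl}. Hence, for $p$ close enough to $p_S$ and $|\theta-1|$ small, any sign-changing element of $\omega(\theta u_p)$ would be forced to have energy arbitrarily close to $E_p(u_p)$ and, by an isolation/nondegeneracy argument for $u_p$, to coincide with $u_p$ itself, contradicting the strict energy gap. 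The only admissible limits are therefore $0$ and the one-signed least-energy solutions $\pm w_p$.

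The decisive step is to exclude convergence to any of $0,\pm w_p$ by linearization around $u_p$. Testing the Jacobi operator $L_p=-\Delta-p|u_p|^{p-1}$ against the orthogonal functions $u_p^\pm$ yields $\langle L_p u_p^\pm,u_p^\pm\rangle=(1-p)\|u_p^\pm\|_{p+1}^{p+1}<0$, so the Morse index of $u_p$ is at least two and the first eigenvalue $\lambda_1<0$ has a positive principal eigenfunction $\phi_1$. Via the bubble-tower asymptotics encoded in $(b)$ and \cite{B-E-P_2007}, $\phi_1$ concentrates at the common concentration point of $u_p^\pm$ and $\langle u_p,\phi_1\rangle\ne 0$. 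Writing $v_{\theta,p}=u_p+w$ and projecting onto $\phi_1$, the coefficient $a_1(t)=\langle w(t),\phi_1\rangle$ satisfies $\dot a_1=|\lambda_1|a_1+$ (higher order in $\|w\|_\infty$) with $a_1(0)=(\theta-1)\langle u_p,\phi_1\rangle\ne 0$; a Gronwall argument forces the orbit to exit any small $L^\infty$-neighborhood of $u_p$ in time $\sim\log|\theta-1|^{-1}$. The main obstacle I anticipate is combining this linear exit with the classification of $\omega(\theta u_p)$ to exclude subsequent capture by $0$ or by $\pm w_p$: in the absence of radial symmetry one must control quantitatively the stable manifolds of the one-signed attractors relative to the unstable direction $\phi_1$, and this is precisely where the fine concentration analysis of $u_p$ and $\phi_1$ supplied by $(a)$--$(b)$ and \cite{B-E-P_2007} must do the heavy lifting.
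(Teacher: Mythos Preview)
Your proposal identifies the right key ingredient---namely that $\int_\Omega u_p\,\varphi_{1,p}\,dx\neq 0$ where $\varphi_{1,p}>0$ is the first eigenfunction of $L_p$---but you then embed this fact in an $\omega$-limit classification argument that is both unnecessary and genuinely incomplete. The paper proceeds much more directly: it quotes a general-domain result from \cite{C-D-W} (Proposition~\ref{base} in the paper) stating that the condition $\int_\Omega u_p\,\varphi_{1,p}\neq 0$ \emph{by itself} forces finite-time blow up of $v_{\theta,p}$ for $0<|1-\theta|$ small. No contradiction argument, no $\omega$-limit sets, no exclusion of $0$ or $\pm w_p$ are needed. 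All the work then goes into proving $\int_\Omega u_p\,\varphi_{1,p}>0$ for $p$ close to $p_S$ (Theorem~\ref{teo:integrale}), and this is done by a rescaling argument around the maximum point of $u_p$: one shows that the rescaled solutions $\widetilde u_p$ converge to the Aubin--Talenti bubble $U$, that the rescaled first eigenvalues $\widetilde\lambda_{1,p}$ and eigenfunctions $\widetilde\varphi_{1,p}$ converge to those of the limit operator $-\Delta-p_S U^{p_S-1}$ on $\mathbb{R}^n$, and finally that a suitably renormalized version of $\int_\Omega |u_p|^{p-1}u_p\,\varphi_{1,p}$ converges to $\int_{\mathbb{R}^n}U^{p_S}\varphi_1^*>0$. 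Condition $(b)$ enters precisely to kill the contribution of $\widetilde\Omega_p^-$ in these limits.

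The gap in your approach is exactly the one you flag as the ``main obstacle'': linear instability of $u_p$ in the $\varphi_{1,p}$-direction only tells you the orbit exits a neighborhood of $u_p$; it says nothing about where it goes afterwards. Excluding convergence to $0$ or to $\pm w_p$ via control of their stable manifolds relative to the unstable direction $\varphi_{1,p}$ is a serious task that you have not carried out, and there is no indication in the literature that $(a)$--$(b)$ alone provide enough information to do this. Moreover, your exclusion of sign-changing limits via an ``isolation/nondegeneracy argument for $u_p$'' is unjustified: nondegeneracy of $u_p$ is neither assumed nor proved, and in fact the Morse index being at least two (which you correctly observe) works against any cheap isolation claim. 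Finally, your one-line justification of $\langle u_p,\varphi_{1,p}\rangle\neq 0$ via ``bubble-tower asymptotics'' is far too sketchy; the paper devotes two full sections to establishing precisely this, and the interplay between the tail estimates on $\widetilde\Omega_p^+$ (via the energy condition $(a)$) and the smallness on $\widetilde\Omega_p^-$ (via the ratio condition $(b)$) is where the real content lies. In short: drop the dynamical-systems wrapper, invoke Proposition~\ref{base} directly, and focus your effort on a rigorous proof of $\int_\Omega u_p\,\varphi_{1,p}>0$ along the lines of Sections~\ref{kdjfkldksf}--\ref{kfkhfkshfkshfhsfhsfhksasasa}.
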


\noindent To prove Theorem \ref{teorema1} we use the following result which has been proved in \cite{C-D-W} for general domains.

\begin{prop} \label{base}
Let $u_p$ be a sign changing solution of~\eqref{eq:probl} and let
$\varphi_{1,p} $ be a first  eigenfunction of the linearized operator $L_p$  at $u_p$. Assume that
\begin{equation}\nonumber
\int _\Omega u_p\,  \varphi_{1,p} \not= 0.
\end{equation}
Then there exists $\varepsilon >0$ such that if $0<|1-\theta |<\varepsilon $, then $v_{\theta,p}$, solution of (\ref{eq:probl_parabolico}) with initial value $\theta u_p$, blows up in finite time.
\end{prop}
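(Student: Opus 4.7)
My plan is to combine two independent ingredients: a strict energy inequality along the ray $\theta u_p$ for $\theta$ near $1$, and an exponential‑growth mechanism along the unstable eigenfunction $\varphi_{1,p}$, activated precisely by the non‑orthogonality hypothesis $\int_\Omega u_p\,\varphi_{1,p}\neq 0$.

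\textbf{Step 1 (strict energy drop).} Since $u_p$ is a critical point of $E_p$ with $\int_\Omega|\nabla u_p|^2=\int_\Omega|u_p|^{p+1}=:I$, a Taylor expansion at $\theta=1$ gives
\[
E_p(\theta u_p)=E_p(u_p)-\tfrac12(p-1)\,I\,(\theta-1)^2+O(|\theta-1|^3),
\]
so for $|\theta-1|$ sufficiently small and $\theta\neq 1$ we obtain $E_p(\theta u_p)<E_p(u_p)$ strictly. Because $E_p$ is non‑increasing along the heat flow, there exists $\kappa=\kappa(\theta)>0$ with $E_p(v_{\theta,p}(t))\le E_p(u_p)-\kappa$ throughout the lifespan of the solution.

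\textbf{Step 2 (exponential growth of the unstable projection).} Since $u_p$ is sign‑changing, it is not a local minimizer of $E_p$, so the first eigenvalue $\lambda_1$ of $L_p=-\Delta-p|u_p|^{p-1}$ is strictly negative. Writing $w(t)=v_{\theta,p}(t)-u_p$ and $\psi(t)=\int_\Omega w(t)\,\varphi_{1,p}\,dx$, the PDE for $v_{\theta,p}$ together with the eigenvalue relation $L_p\varphi_{1,p}=\lambda_1\varphi_{1,p}$ yields
\[
\psi'(t)=-\lambda_1\,\psi(t)+\int_\Omega \mathcal{R}(w(t))\,\varphi_{1,p}\,dx,
\]
where $\mathcal{R}(w)=|u_p+w|^{p-1}(u_p+w)-|u_p|^{p-1}u_p-p|u_p|^{p-1}w$ is a superquadratic remainder in $w$. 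The hypothesis and $\theta\neq 1$ give $\psi(0)=(\theta-1)\int_\Omega u_p\,\varphi_{1,p}\neq 0$. A Gronwall argument then shows that, as long as $\|w(t)\|_\infty$ stays below a fixed threshold, $|\psi(t)|\ge\tfrac12|\psi(0)|\,e^{|\lambda_1|t/2}$. Hence the orbit must exit every prescribed small $L^\infty$‑neighborhood of $u_p$ in finite time.

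\textbf{Step 3 (contradiction with global existence).} Assume for contradiction that $v_{\theta,p}$ is global. In the subcritical regime $p<p_S$, parabolic smoothing combined with the energy bound from Step 1 yields a uniform $L^\infty$‑bound on $v_{\theta,p}$; by compactness of the trajectory and LaSalle's invariance principle, its $\omega$‑limit set is a nonempty connected set of stationary solutions of \eqref{eq:probl} with energy $\le E_p(u_p)-\kappa$. Step 2 excludes $u_p$ from this set. To eliminate the remaining alternative—convergence to another equilibrium of strictly lower energy—one combines the local description of the unstable manifold of $u_p$ with the quantitative growth of $\psi$ and the strict energy gap $\kappa$ to produce a Levine‑type concavity differential inequality for $\int_\Omega v_{\theta,p}^2\,dx$, incompatible with a globally bounded orbit. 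The resulting contradiction forces finite‑time blow‑up. The hardest point is exactly this last bridging argument: the exponential growth of $\psi$ is only a local phenomenon near $u_p$, while the energy gap $\kappa$ is generally small, and upgrading these two facts into a genuine blow‑up mechanism (rather than convergence to a lower‑energy equilibrium) is the technical heart of the proof carried out in \cite{C-D-W}.
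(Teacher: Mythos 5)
First, note that the paper does not prove Proposition~\ref{base} at all: it is imported verbatim from \cite{C-D-W}, so there is no internal proof to compare against, and your attempt has to stand on its own. It does not. The decisive gap is your Step~3, and you concede it yourself: having shown (at best) that the orbit leaves a small neighborhood of $u_p$ while carrying energy $E_p(u_p)-\kappa$ with $\kappa=O((\theta-1)^2)$, nothing you write excludes the genuinely possible alternative that $v_{\theta,p}$ converges to $0$ (which is an equilibrium of energy $0<E_p(u_p)$) or to some other lower-energy stationary solution. The Levine concavity argument you invoke requires the energy to become \emph{negative} at some time, and the tiny gap $\kappa$ gives nothing of the sort; asserting that the unstable-manifold description ``produces a Levine-type concavity differential inequality'' is precisely the statement to be proved, not a step in its proof. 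A sketch whose central mechanism is ``this is the technical heart of the proof carried out in \cite{C-D-W}'' is a citation, not a proof. (Your Step~3 also quietly uses that global subcritical solutions with bounded energy are uniformly bounded in $L^\infty$; for sign-changing solutions this is a nontrivial theorem of its own, not a consequence of ``parabolic smoothing plus the energy bound.'')

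Step~2 is also not closed as written. From $\psi'=-\lambda_{1}\psi+\int_\Omega\mathcal{R}(w)\varphi_{1,p}\,dx$ you cannot run Gronwall on $\psi$ alone, because $\mathcal{R}(w)$ is controlled by norms of the \emph{full} perturbation $w$, not by $|\psi|$: even with $\|w\|_\infty\le\delta$ one only gets $\bigl|\int_\Omega\mathcal{R}(w)\varphi_{1,p}\bigr|\le C\delta^{\sigma}\|w\|_{L^2}$, and $\|w\|_{L^2}$ need not be comparable to $|\psi|$. Since the linearization at a sign-changing $u_p$ generically has Morse index at least $2$, the orthogonal component of $w$ is itself unstable and must be tracked; the correct formulation is that $\theta u_p$ does not lie on the local stable manifold of $u_p$ (this is where $\int_\Omega u_p\varphi_{1,p}\neq0$ enters), which requires an invariant-manifold or two-component differential-inequality argument rather than the one-line Gronwall bound you give. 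Two smaller points: $\lambda_{1,p}<0$ holds for \emph{every} nontrivial stationary solution (test the Rayleigh quotient with $u_p$ itself, as the paper does in Lemma~\ref{lemma:bounded}), so the appeal to sign-change there is a red herring; and your Step~1 energy expansion is correct but, as explained above, too weak to do the work you assign to it.
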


\noindent Thus Theorem \ref{teorema1} will be a consequence of the following

\begin{theorem}\label{teo:integrale}
Let  $n> 2$, $1<p<p_S$, $\Omega\subset\mathbb{R}^n$ a bounded smooth domain and $u_p$ a sign changing solution of (\ref{eq:probl}) satisfying conditions $(a)$ and $(b)$.
Then there exists $p^*<p_S$ such that for $p^*<p<p_S$
\begin{equation}\label{jkgkdfhgl}
\int_\Omega u_p\varphi_{1,p}\, dx>0,
\end{equation}
where $\varphi_{1,p}$ is the first positive eigenfunction of the linearized operator $L_p$  at $u_p$.
\end{theorem}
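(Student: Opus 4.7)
The plan begins with a Green-type identity. Multiplying the eigenfunction equation $-\Delta\varphi_{1,p}-p|u_p|^{p-1}\varphi_{1,p}=\lambda_{1,p}\varphi_{1,p}$ by $u_p$, integrating over $\Omega$, and using $-\Delta u_p=|u_p|^{p-1}u_p$ together with the Dirichlet boundary condition yields
\begin{equation*}
(1-p)\int_\Omega|u_p|^{p-1}u_p\,\varphi_{1,p}\,dx \;=\; \lambda_{1,p}\int_\Omega u_p\,\varphi_{1,p}\,dx.
\end{equation*}
Since $u_p$ is sign changing the first eigenvalue $\lambda_{1,p}$ of $L_p$ is strictly negative (a classical Morse-index fact), so together with $1-p<0$ the multiplier $(1-p)/\lambda_{1,p}$ is positive. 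Hence \eqref{jkgkdfhgl} is equivalent to
\begin{equation*}
\int_\Omega(u_p^+)^p\,\varphi_{1,p}\,dx \;>\; \int_\Omega(u_p^-)^p\,\varphi_{1,p}\,dx,
\end{equation*}
and the whole problem reduces to comparing two peak-concentrated integrals, where the asymmetry supplied by $(b)$ should drive the inequality.

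Next I would invoke the concentration analysis of \cite{B-E-P_2007}: under $(a)$ and $(b)$ both $u_p^+$ and $u_p^-$ concentrate at a common interior point $x_0\in\Omega$; setting $\mu_p^\pm:=\|u_p^\pm\|_\infty^{-2/(n-2)}$ and denoting by $x_p^\pm$ the respective peaks, the rescalings $\tilde u_p(y):=(\mu_p^+)^{(n-2)/2}u_p(x_p^++\mu_p^+y)$ and $\bar u_p(z):=(\mu_p^-)^{(n-2)/2}u_p(x_p^-+\mu_p^-z)$ converge in $C^2_{\mathrm{loc}}$ to a standard Aubin--Talenti bubble $U$ and to its negative counterpart, while $(b)$ forces the scale separation $\mu_p^+/\mu_p^-\to 0$. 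I then would carry out the parallel analysis for the linearization: in $y$-coordinates the contribution of $u_p^-$ to the potential $p|u_p|^{p-1}$ is $O((\mu_p^+/\mu_p^-)^{(n-2)(p-1)/2})=o(1)$, so the rescaled operator $(\mu_p^+)^2L_p$ tends to $L^*:=-\Delta-p_S U^{p_S-1}$ on $D^{1,2}(\mathbb R^n)$. A variational upper bound on $\lambda_{1,p}$ using an appropriate test function plus a blow-up argument for the lower bound then gives $(\mu_p^+)^2\lambda_{1,p}\to\lambda^*<0$, the first eigenvalue of $L^*$, and (after normalizing $\|\varphi_{1,p}\|_\infty=1$) the rescaled eigenfunction $\tilde\varphi_p(y):=\varphi_{1,p}(x_p^++\mu_p^+y)$ converges locally to the first positive eigenfunction $\Phi^*>0$ of $L^*$. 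Because $\lambda^*<0$, a comparison-principle argument against an exponential barrier in the region where $p|u_p|^{p-1}\ll|\lambda_{1,p}|$ delivers the $p$-uniform bound $\tilde\varphi_p(y)\le Ce^{-c|y|}$.

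With these ingredients in hand, both integrals yield to the same change of variables. At the positive peak,
\begin{equation*}
\int_\Omega(u_p^+)^p\varphi_{1,p}\,dx = (\mu_p^+)^{n-p(n-2)/2}\int\tilde u_p^{\,p}\tilde\varphi_p\,dy \sim (\mu_p^+)^{(n-2)/2}\int U^{p_S}\Phi^*\,dy,
\end{equation*}
a strictly positive quantity of size $(\mu_p^+)^{(n-2)/2}$. At the negative peak,
\begin{equation*}
\int_\Omega(u_p^-)^p\varphi_{1,p}\,dx = (\mu_p^-)^{n-p(n-2)/2}\int|\bar u_p|^p\bar\varphi_p\,dz,
\end{equation*}
but here $\bar\varphi_p(z)=\tilde\varphi_p\!\left(\dfrac{x_p^--x_p^+}{\mu_p^+}+\dfrac{\mu_p^-}{\mu_p^+}z\right)$ is evaluated at magnitudes $\sim\mu_p^-/\mu_p^+\to\infty$, so the uniform exponential decay gives $\bar\varphi_p\lesssim\exp(-c\,\mu_p^-/\mu_p^+)$. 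Thus the negative-peak integral is superpolynomially smaller than the positive one, and the required strict inequality follows for $p$ close enough to $p_S$, proving \eqref{jkgkdfhgl}.

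The most delicate step is the spectral analysis: proving that $\varphi_{1,p}$ concentrates on the small scale $\mu_p^+$ rather than on the larger $\mu_p^-$ (which would reverse the conclusion), identifying the limit spectral data $(L^*,\lambda^*,\Phi^*)$, and obtaining the $p$-uniform exponential decay of $\tilde\varphi_p$. The concentration on $\mu_p^+$ is forced by the depth of the potential well, $p(u_p^+)^{p-1}\sim(\mu_p^+)^{-2}\gg(\mu_p^-)^{-2}$, together with a variational upper bound on $\lambda_{1,p}$ via a $\Phi^*$-shaped trial function; the negativity of $\lambda^*$ is easily verified by testing $L^*$ on $U$, since $\langle L^*U,U\rangle=-\tfrac{4}{n-2}\int U^{p_S+1}<0$. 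The exponential decay is then the usual barrier argument. These are technical but well within the repertoire of concentration analysis at the critical Sobolev exponent.
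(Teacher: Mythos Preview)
Your overall strategy coincides with the paper's: the same Green identity reduces \eqref{jkgkdfhgl} to the positivity of $\int_\Omega|u_p|^{p-1}u_p\,\varphi_{1,p}\,dx$, and a rescaling about the positive peak shows that a suitably normalized version of this integral converges to $\int_{\mathbb R^n}U^{p_S}\varphi_1^*>0$. The paper's execution is, however, considerably lighter. It normalizes $\varphi_{1,p}$ in $L^2$ rather than $L^\infty$, proves $\tilde\lambda_{1,p}\to\lambda_1^*$ by a direct two-sided variational comparison, and then observes that $\tilde\varphi_{1,p}$ is a minimizing sequence for $\lambda_1^*$; the $L^2$-compactness established in Proposition~\ref{prop:spectral} immediately gives $\tilde\varphi_{1,p}\to\varphi_1^*$ in $L^2(\mathbb R^n)$. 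No pointwise exponential decay is needed: the tails in $\tilde\Omega_p^+\cap\{|x|>R\}$ are handled by H\"older and Sobolev bounds, and the contribution of $\tilde\Omega_p^-$ is killed in one line via $|\tilde u_p|\le\|u_p^-\|_\infty/\|u_p^+\|_\infty\to 0$ from condition~$(b)$. Thus a second rescaling at the negative peak never enters.

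Your route is workable but heavier, and one claim is overstated. The uniform exponential decay of $\tilde\varphi_p$ requires knowing that the rescaled potential $p|\tilde u_p|^{p-1}$ is uniformly small outside a fixed ball---true, but this is precisely what the paper establishes on the way to \eqref{eq:8}, so you are not bypassing that work. More importantly, in the bubble-tower configuration the two peaks sit at (essentially) the same point, so $\bar\varphi_p(z)=\tilde\varphi_p\bigl((\mu_p^-/\mu_p^+)z\bigr)$ is of order one near $z=0$, not exponentially small; the negative-peak integral is then only $O\bigl((\mu_p^+/\mu_p^-)^{(n+2)/2}\bigr)$ relative to the positive one. This is still $o(1)$ and the conclusion survives, but the ``superpolynomially smaller'' assertion is incorrect. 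The paper's direct $L^\infty$ bound on $\tilde u_p$ over $\tilde\Omega_p^-$ sidesteps this subtlety entirely.
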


\noindent
Let us point out that for the proof of Theorem \ref{teo:integrale} the property $(b)$  of our stationary solutions is crucial.
Note that both  properties $(a)$ and $(b)$ are actually satisfied in the special case of  radial sign changing solutions of  (\ref{eq:probl}) (in the ball) with two nodal regions.\\
\noindent So this clarifies that it is neither the symmetry nor the one-dimensional character of the solution which leads to the blow up result obtained in \cite{C-D-W} but rather these properties of the stationary solution that can hold in any bounded domain. Therefore we believe that  also for other semilinear problems where such solutions exist, the same blow up result should be true.\\

\noindent The proof of Theorem \ref{teo:integrale} is based on a rescaling argument about the maximum point of $u_p$.
Indeed, analyzing the asymptotic behavior of the rescaled solutions and of the rescaled first eigenfunctions, we are able to prove \eqref{jkgkdfhgl} by using the properties of the solutions of the limit problem.\\
\noindent The same result of Theorem \ref{teo:integrale} can be  easily extended to the case when the initial datum is a nodal solution $u_{p,\KK}$ of \eqref{eq:probl} with a fixed number $\KK>2$ of nodal regions satisfying:
\begin{itemize}\label{hpuhihg}
\item[$(a)_{\KK}$]\,\,$\int_\Omega|\triangledown u_{p,\KK}|^2dx\leq C$,
\item[$(b)_{\KK}$] $\exists$ a nodal region $\Omega_p^1$ such that, setting
\begin{center}
$u^1_{p,\KK}\,:=\,u_{p,\KK}\cdot\chi_{\Omega_p^1}\quad$ and $\quad\hat{u}_{p,\KK}\,:=\,u_{p,\KK}\cdot\chi_{\Omega\setminus\Omega_p^1}$
\end{center}
then
\[
\int_{\Omega_p^1}|\triangledown u^1_{p,\KK}|^2dx\rightarrow S^{\frac{n}{2}}\qquad \text{as}\,\,\,p\shortrightarrow p_S
\]
and
\[
\frac{\|u^1_{p,\KK}\|_\infty}{\|\hat{u}^1_{p,\KK}\|_\infty}\rightarrow \infty \quad\qquad\qquad \text{as}\,\,\,p\shortrightarrow p_S\,.
\]
\end{itemize}
Solutions of this type have been found in \cite{M-P,P-W} but other kind of solutions could be considered.\\

\noindent The outline of the proof is the following. In Section \ref{kdjfkldksf} we prove some preliminary results,
while in Section \ref{kdhfsklhfbvcbxvbxvcbx} we study the asymptotic behavior of the first eigenvalue and of the first eigenfunction of the linearized operator at $u_p$. Finally in Section \ref{kfkhfkshfkshfhsfhsfhksasasa} we prove Theorem \ref{teo:integrale}.

\section{Preliminaries}\label{kdjfkldksf}
Let us start by recalling some  properties of our solutions.
\newtheorem{lemma}[theorem]{Lemma}
\begin{lemma}\label{lemma:hp}
Let $(u_p)$ be a family of sign-changing solutions of (\ref{eq:probl}) satisfying $(a)$. Then
\begin{eqnarray}
(i)& \int_\Omega |\triangledown u_p^+|^2dx\xrightarrow{p\shortrightarrow p_S} S^{\frac{n}{2}},&
\int_\Omega |\triangledown u_p^-|^2dx\xrightarrow{p\shortrightarrow p_S} S^{\frac{n}{2}},\nonumber\\
(ii)& \int_\Omega (u_p^+)^{\frac{2n}{n-2}}dx\xrightarrow{p\shortrightarrow p_S} S^{\frac{n}{2}},&
\int_\Omega (u_p^-)^{\frac{2n}{n-2}}dx\xrightarrow{p\shortrightarrow p_S} S^{\frac{n}{2}},\nonumber\\
(iii)& u_p\rightharpoonup 0\mbox{ as }p\shortrightarrow p_S,  \nonumber\\
(iv)& M_{p,+}:=\max_{\Omega}u_p^+\xrightarrow{p\shortrightarrow p_S} +\infty,& M_{p,-}:=\max_{\Omega}u_p^-\xrightarrow{p\shortrightarrow p_S} +\infty,\nonumber
\end{eqnarray}
with $u_p^+=\max_{\Omega}(u_p,0)$ and $u_p^-=\max_{\Omega}(-u_p,0)$.
\end{lemma}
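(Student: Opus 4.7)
My plan is to deduce all four properties from hypothesis (a) using two elementary tools: the testing identities obtained by multiplying the equation by $u_p^\pm \in H_0^1(\Omega)$, and the Sobolev inequality together with the fact that its extremizers do not belong to $H_0^1(\Omega)$ when $\Omega$ is a proper subdomain of $\mathbb{R}^n$. Since $\nabla u_p \cdot \nabla u_p^\pm = |\nabla u_p^\pm|^2$ a.e., testing $-\Delta u_p = |u_p|^{p-1} u_p$ with $u_p^\pm$ yields $\int_\Omega |\nabla u_p^\pm|^2\,dx = \int_\Omega (u_p^\pm)^{p+1}\,dx$. Combining this identity with H\"older's inequality (which costs only a factor $|\Omega|^{(2^*-p-1)/2^*} \to 1$) and the Sobolev inequality $\|u\|_{2^*}^2 \le S^{-1} \|\nabla u\|_2^2$, I would derive $\int_\Omega |\nabla u_p^\pm|^2\,dx \ge S^{n/2} + o(1)$. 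Since the sum $\int |\nabla u_p^+|^2 + \int |\nabla u_p^-|^2 = \int |\nabla u_p|^2 \to 2 S^{n/2}$ by (a), both lower bounds must saturate, proving (i). Item (ii) then follows at once: the testing identity gives $\int (u_p^\pm)^{p+1} \to S^{n/2}$, Sobolev provides the matching upper bound $\int (u_p^\pm)^{2^*} \le S^{-2^*/2} (\int |\nabla u_p^\pm|^2)^{2^*/2} \to S^{n/2}$, and a H\"older bound using $(p+1)/2^* \to 1$ supplies the matching lower bound.

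For (iii), along a subsequence $u_p \rightharpoonup u$ in $H_0^1(\Omega)$. The compact embedding $H_0^1 \hookrightarrow L^q$ for $q < 2^*$, together with the uniform $L^{(p+1)/p}$ bound on $|u_p|^{p-1} u_p$ (which yields the uniform integrability required by Vitali), lets me pass to the limit in the equation and conclude that $u$ is a weak solution of $-\Delta u = |u|^{p_S - 1} u$ in $\Omega$ with $u = 0$ on $\partial \Omega$. Testing this limit equation with $u^+$ gives $\int |\nabla u^+|^2 = \int (u^+)^{2^*}$; if $u^+ \not\equiv 0$ the Sobolev inequality then forces $\int |\nabla u^+|^2 \ge S^{n/2}$, while weak lower semicontinuity of the $H_0^1$-seminorm combined with (i) forces $\int |\nabla u^+|^2 \le S^{n/2}$. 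Equality would exhibit $u^+$ as an extremizer for $S$ in $H_0^1(\Omega)$, which is impossible because the Aubin--Talenti extremizers on $\mathbb{R}^n$ are everywhere strictly positive and so cannot vanish outside a proper subdomain. Hence $u^+ \equiv 0$ and, symmetrically, $u^- \equiv 0$, giving (iii).

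Finally, (iv) follows by contradiction: if $M_{p,+}$ stayed bounded along a subsequence, classical bootstrap regularity applied to $-\Delta u_p = |u_p|^{p-1} u_p$ (whose right-hand side is uniformly bounded) would give a uniform $C^{1,\alpha}(\overline{\Omega})$ estimate on $u_p$, and Arzel\`a--Ascoli would extract a subsequence with $u_p^+$ converging uniformly to a limit that must coincide with the weak $H_0^1$-limit from (iii), namely zero; but then $\int (u_p^+)^{2^*} \to 0$, contradicting (ii). The same argument disposes of $M_{p,-}$. The step I expect to be the main obstacle is (iii): one needs to handle carefully the moving exponent in the nonlinearity when identifying the limit equation, and the extremizer dichotomy that rules out $u \not\equiv 0$ relies on (a) pinning the energy down to exactly $2S^{n/2}$, which is precisely what forces the chain of Sobolev-type inequalities to be tight at each step.
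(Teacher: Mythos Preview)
The paper does not actually prove this lemma; it simply refers the reader to \cite[Lemma~2.1]{B-E-P_2006}. Your argument is essentially the standard one found there, and parts (i)--(iii) are carried out correctly: the testing identities $\int|\nabla u_p^\pm|^2=\int(u_p^\pm)^{p+1}$, combined with Sobolev/H\"older and the constraint $\int|\nabla u_p|^2\to 2S^{n/2}$, force each piece to the value $S^{n/2}$; and the nonattainment of $S$ on a bounded domain rules out a nontrivial weak limit.

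There is, however, a genuine slip in your proof of (iv). You assume only that $M_{p,+}$ is bounded along a subsequence and then assert that the right-hand side $|u_p|^{p-1}u_p$ of the full equation is uniformly bounded, so that elliptic regularity yields a $C^{1,\alpha}(\overline\Omega)$ estimate on $u_p$. But boundedness of $M_{p,+}$ says nothing about $u_p^-$: if $M_{p,-}\to\infty$ (which is in fact what you are trying to prove), the right-hand side is \emph{not} bounded and the bootstrap does not apply to $u_p$ on all of $\Omega$. Working instead on the moving nodal domain $\{u_p>0\}$ is awkward because the domain varies with $p$. The clean fix avoids regularity altogether: from (iii) and the compact embedding $H_0^1\hookrightarrow L^2$ you have $\|u_p^+\|_{L^2(\Omega)}\to 0$, while from (i) and the testing identity $\int_\Omega (u_p^+)^{p+1}dx\to S^{n/2}$. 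Since
\[
\int_\Omega (u_p^+)^{p+1}\,dx \;\le\; M_{p,+}^{\,p-1}\int_\Omega (u_p^+)^{2}\,dx,
\]
the left side staying bounded away from $0$ while the $L^2$ norm vanishes forces $M_{p,+}^{\,p-1}\to\infty$, hence $M_{p,+}\to\infty$; the same argument handles $M_{p,-}$.
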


\proof
We refer the reader to \cite[Lemma 2.1]{B-E-P_2006}.
\endproof
We now describe the rescaled problem.
Let us define
\begin{equation}
\widetilde{u}_p(x):=\frac{1}{M_p}u_p\biggl(a_p+\frac{x}{M_p^{\frac{p-1}{2}}}\biggl),\quad \mbox{ for }x\in\widetilde\Omega_p:=M_p^{\frac{p-1}{2}}\big(\Omega - a_p\big).\label{eq:u_ptilde}
\end{equation}
where $a_p$ and $M_p$ are such that $|u_p(a_p)|=\|u_p\|_{L^\infty(\Omega)}=:M_p$. Without loss of generality, we can assume that $u_p(a_p)>0$.

Let us consider the limit problem in $\mathbb{R}^n$, that is
\begin{equation}
\left\{\begin{array}{ll} -\triangle u=|u|^{p_S-1}u=|u|^{\frac{4}{n-2}}u\qquad & \mbox{in }\mathbb{R}^n\\
                         u(0)=1.
       \end{array}\right.\label{eq:critico}
\end{equation}
It is well known that the unique regular positive  solution is radial and is given by
\begin{equation*}
U(x)=\Bigg(\frac{n(n-2)}{n(n-2)+|x|^2}\Bigg)^{\frac{n-2}{2}}\qquad\qquad\mbox{for }x\in\mathbb{R}^n.
\end{equation*}
Moreover any sign changing solution of \eqref{eq:critico} has energy larger than $2S^{\frac{n}{2}}$.
\noindent We have
\begin{lemma} For $p\shortrightarrow p_S$
\begin{equation}\nonumber
\widetilde{u}_p\longrightarrow U \mbox{ in } C^2_{loc}(\mathbb{R}^n).
\end{equation}
\end{lemma}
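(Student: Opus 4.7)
The plan is to establish the convergence by a standard rescaling/compactness argument, using the energy bound from $(a)$ and the asymmetry from $(b)$ to rule out sign-changing limits.

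First I would verify the scaling: a direct computation shows that $\widetilde u_p$ satisfies
\[
-\Delta \widetilde u_p = |\widetilde u_p|^{p-1}\widetilde u_p \quad\text{in } \widetilde\Omega_p,\qquad \widetilde u_p = 0 \text{ on } \partial\widetilde\Omega_p,
\]
with $\widetilde u_p(0)=1$ and $\|\widetilde u_p\|_\infty = 1$. Moreover, by change of variables $\int_{\widetilde\Omega_p}|\nabla \widetilde u_p|^2\,dx = \int_\Omega|\nabla u_p|^2\,dx$, which by $(a)$ converges to $2S^{n/2}$.

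Next I would show that $\widetilde\Omega_p$ invades $\mathbb{R}^n$, i.e.\ $d_p:=M_p^{(p-1)/2}\operatorname{dist}(a_p,\partial\Omega)\to\infty$. Suppose by contradiction that, along a subsequence, $d_p\to d_\infty<\infty$. After a rotation, $\widetilde\Omega_p$ converges to a half-space $H=\{x_n > -d_\infty\}$. The uniform $L^\infty$ bound, together with standard elliptic $C^{2,\alpha}_{\mathrm{loc}}$ estimates applied to $-\Delta \widetilde u_p = |\widetilde u_p|^{p-1}\widetilde u_p$, would allow extraction of a $C^2_{\mathrm{loc}}$ limit $\widetilde u$ solving $-\Delta \widetilde u = |\widetilde u|^{p_S-1}\widetilde u$ in $H$ with $\widetilde u=0$ on $\partial H$, $|\widetilde u|\le 1$, and $\widetilde u(0)=1$. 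The Liouville-type theorem of Gidas--Spruck for the critical equation in a half-space rules this out, giving a contradiction. Hence $d_p\to\infty$ and $\widetilde\Omega_p\to\mathbb{R}^n$.

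With this in hand, the $L^\infty$ bound $|\widetilde u_p|\le1$ and the interior $C^{2,\alpha}_{\mathrm{loc}}$ elliptic estimates give precompactness of $\{\widetilde u_p\}$ in $C^2_{\mathrm{loc}}(\mathbb{R}^n)$. Passing to the limit in the equation, any accumulation point $\widetilde u$ is a bounded $C^2$ solution of $-\Delta u = |u|^{p_S-1}u$ on $\mathbb{R}^n$ with $\widetilde u(0)=1$ and $\|\widetilde u\|_\infty\le 1$. Now comes the role of $(b)$: since $\widetilde u_p^- = \frac{1}{M_p}u_p^-(a_p + \cdot/M_p^{(p-1)/2})$ has supremum $M_{p,-}/M_{p,+}\to 0$, the limit satisfies $\widetilde u\ge 0$. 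The strong maximum principle then gives $\widetilde u>0$, and by the Caffarelli--Gidas--Spruck classification of positive solutions of the critical equation on $\mathbb{R}^n$ with maximum at $0$ equal to $1$, we conclude $\widetilde u = U$.

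Finally, since the limit $U$ does not depend on the subsequence, the whole family $\widetilde u_p$ converges to $U$ in $C^2_{\mathrm{loc}}(\mathbb{R}^n)$. The main obstacle is the non-concentration at the boundary, i.e.\ proving $d_p\to\infty$; without $(b)$ one would additionally have to exclude a sign-changing limit of energy $2S^{n/2}$, but $(b)$ forces the rescaled negative part to vanish and reduces the identification to the classical positive case.
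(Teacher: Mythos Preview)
Your outline follows the standard blow-up/compactness scheme that the paper simply defers to \cite{B-E-P_2006}, so the strategy is the right one. Two small points, however, need attention.

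First, the energy identity you state is not exact for $p<p_S$: the change of variables gives
\[
\int_{\widetilde\Omega_p}|\nabla\widetilde u_p|^2\,dx
= M_p^{\frac{n(p-1)}{2}-(p+1)}\int_\Omega|\nabla u_p|^2\,dx,
\]
and the prefactor is $1$ only at $p=p_S$. This does not harm the argument, since your compactness really comes from the uniform bound $|\widetilde u_p|\le 1$ together with interior elliptic estimates, not from the energy; just drop or weaken that sentence.

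Second, in the half-space step you invoke a Liouville theorem before establishing the sign of the limit. The Gidas--Spruck result is for nonnegative solutions (and, as usually stated, for subcritical exponents), so a sign-changing half-space limit is not excluded by it. The fix is immediate with the tool you already use later: condition $(b)$ gives $\|\widetilde u_p^-\|_\infty = M_{p,-}/M_{p,+}\to 0$, so \emph{any} $C^2_{\mathrm{loc}}$ limit is nonnegative, whether the limiting domain is a half-space or all of $\mathbb{R}^n$. Once this is noted, the nonexistence of bounded positive solutions of the critical equation in a half-space with Dirichlet data (obtained, e.g., via Pohozaev or moving-plane arguments as in the references used in \cite{B-E-P_2006}) yields the desired contradiction. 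With these two adjustments your proof is complete and coincides with the argument the paper cites.
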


\proof
The proof is the same (with obvious changes) as the one of the similar statement in Theorem 1.1 of \cite{B-E-P_2006} (see page 777).
\endproof
%\theoremstyle{definition}
%\newtheorem{remark}[theorem]{Remark}
%\begin{remark}\label{remark:positivity}
%We have to observe that $supp(\widetilde u_p^+)$ overruns the whole space for $p\shortrightarrow p_S$, namely we can assert that for every $R>0$, $B_R(0)\subseteq supp(\widetilde u_p^+)$ choosing $p$ close enough to $p_S$.
%\end{remark}

Now we study the linearization of the limit problem (\ref{eq:critico}), so we define the operator
\begin{equation}
L^*(v):=-\triangle v-p_S|U|^{p_S-1}v,\qquad v\in H^2(\mathbb{R}^n)\nonumber
\end{equation}
where $U$ is the solution of (\ref{eq:critico}). The Rayleigh functional associated to $L^*$ is
\begin{equation}
\mathcal{R}(v)=\int_{\mathbb{R}^n}|\triangledown v|^2- p_S|U|^{p_S-1}v^2 dx\nonumber
\end{equation}
and we define
\begin{equation}
\lambda_1^*:=\inf_{\begin{subarray}{c} v\in H^1(\mathbb{R}^n),\\
\|v\|_{{L^2}(\mathbb{R}^n)}=1
\end{subarray}}\mathcal{R}(v).\label{eq:inf}
\end{equation}
We observe that $\lambda_1^*>-\infty$, since $U$ is bounded.

\begin{rem}\label{kdhfsfhkdhkdhgkdhg}
It can be shown, with standard arguments, that there exists a unique positive minimizer $\varphi_1^*$ to (\ref{eq:inf}) which is radial and radially nonincreasing; moreover $\lambda_1^*$ is an eigenvalue of $L^*$ and $\varphi_1^*$ is an eigenvector associated to $\lambda_1^*$. For further details see \cite{L-L}.
\end{rem}

\theoremstyle{plain}
\begin{prop}\label{prop:spectral}
We have the following.
\begin{itemize}
\item[(i)]  $\lambda_1^*<0$,
\item[(ii)] every minimizing sequence of (\ref{eq:inf}) has a subsequence which strongly converges in $L^2(\mathbb{R}^n)$.
\end{itemize}
\end{prop}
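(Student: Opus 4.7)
I would handle the two parts in order, since (ii) will use (i).

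\medskip

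\noindent\textbf{Part (i).} The plan is to exhibit an admissible $v$ with $\mathcal{R}(v)<0$ using a scaling concentration. Pick any $\varphi\in C^\infty_c(\mathbb{R}^n)$ with $\|\varphi\|_{L^2}=1$ and, for $\mu>0$, set
\[
\varphi_\mu(x):=\mu^{n/2}\varphi(\mu x),
\]
which still satisfies $\|\varphi_\mu\|_{L^2}=1$. A direct change of variable gives $\int|\nabla\varphi_\mu|^2=\mu^2\int|\nabla\varphi|^2$, while
\[
\int_{\mathbb{R}^n}|U|^{p_S-1}\varphi_\mu^2\,dx=\int_{\mathbb{R}^n}|U(y/\mu)|^{p_S-1}\varphi(y)^2\,dy\;\xrightarrow[\mu\to 0]{}\;|U(0)|^{p_S-1}=1
\]
by dominated convergence (since $U$ is bounded and $U(0)=1$). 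Hence $\mathcal{R}(\varphi_\mu)\to-p_S<0$ as $\mu\to 0$, so $\lambda_1^*\le\mathcal{R}(\varphi_\mu)<0$ for all sufficiently small $\mu$.

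\medskip

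\noindent\textbf{Part (ii).} Let $(v_n)\subset H^1(\mathbb{R}^n)$ with $\|v_n\|_{L^2}=1$ and $\mathcal{R}(v_n)\to\lambda_1^*$. Since $U\in L^\infty$, the identity $\int|\nabla v_n|^2=\mathcal{R}(v_n)+p_S\int|U|^{p_S-1}v_n^2$ shows that $(v_n)$ is bounded in $H^1(\mathbb{R}^n)$. Passing to a subsequence, $v_n\rightharpoonup v^*$ in $H^1$, $v_n\to v^*$ a.e., and $v_n\to v^*$ in $L^2_{\mathrm{loc}}$. Set $w_n:=v_n-v^*$, so $w_n\rightharpoonup 0$ in $H^1$ and, by Brezis--Lieb,
\[
\int|\nabla v_n|^2=\int|\nabla v^*|^2+\int|\nabla w_n|^2+o(1),\qquad \|v_n\|_{L^2}^2=\|v^*\|_{L^2}^2+\|w_n\|_{L^2}^2+o(1).
\]
The key step is to show that the potential term passes to the limit: $\int|U|^{p_S-1}w_n^2\to 0$. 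This uses that $|U|^{p_S-1}\in L^{n/2}(\mathbb{R}^n)$ (because $|U|^{(p_S-1)n/2}=|U|^{2^*}$ is integrable). Splitting the integral at radius $R$, on $B_R$ we use that $w_n\to 0$ in $L^2_{\mathrm{loc}}$, and on $B_R^c$ we apply H\"older with exponents $(n/2,n/(n-2))$:
\[
\int_{B_R^c}|U|^{p_S-1}w_n^2\,dx\le\Bigl(\int_{B_R^c}|U|^{2^*}\,dx\Bigr)^{2/n}\|w_n\|_{L^{2^*}}^2,
\]
and the first factor tends to $0$ as $R\to\infty$ while $\|w_n\|_{L^{2^*}}$ stays bounded by Sobolev.

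\medskip

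Putting these together,
\[
\lambda_1^*=\lim\mathcal{R}(v_n)=\mathcal{R}(v^*)+\lim\int|\nabla w_n|^2\ge\lambda_1^*\|v^*\|_{L^2}^2,
\]
where the last inequality uses $\mathcal{R}(v^*)\ge\lambda_1^*\|v^*\|_{L^2}^2$ (the scale-invariant form of the definition of $\lambda_1^*$) and $\int|\nabla w_n|^2\ge 0$. Writing $\alpha:=\|v^*\|_{L^2}^2\in[0,1]$, this reads $\lambda_1^*(1-\alpha)\ge 0$, and since $\lambda_1^*<0$ by part (i), we must have $\alpha=1$. Thus $\|v_n\|_{L^2}\to\|v^*\|_{L^2}$ together with $v_n\rightharpoonup v^*$ in $L^2$ yields $v_n\to v^*$ strongly in $L^2(\mathbb{R}^n)$.

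\medskip

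\noindent\textbf{Where I expect friction.} The delicate point is purely the non-compactness of $\mathbb{R}^n$: controlling $\int|U|^{p_S-1}w_n^2$ uniformly at infinity. The saving feature is the decay $|U(x)|^{p_S-1}\sim|x|^{-4}$, which makes $|U|^{p_S-1}$ belong to $L^{n/2}$, matching the Sobolev exponent. Everything else (boundedness, weak limits, Brezis--Lieb, concluding strong $L^2$ convergence from weak convergence plus norm convergence) is routine once this decay is exploited.
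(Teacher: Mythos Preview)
Your argument is correct and follows essentially the same route as the paper. For (ii) both proofs extract a weak $H^1$-limit, show that the weighted term $\int |U|^{p_S-1}(\cdot)^2$ passes to the limit by a near/far splitting, and then use $\lambda_1^*<0$ to force $\|v^*\|_{L^2}=1$; the paper does this via weak lower semicontinuity and the pointwise bound $|U|^{p_S-1}\le c/R^4$ for $|x|>R$, whereas you use a Brezis--Lieb splitting and H\"older with $|U|^{p_S-1}\in L^{n/2}$, which is equivalent. The only genuine difference is in (i): the paper simply takes $U$ itself as test function and reads off $\mathcal{R}(U)=(1-p_S)\int|\nabla U|^2<0$ from the equation, which is shorter than your concentration argument, though yours has the advantage of not requiring $U\in H^1(\mathbb{R}^n)$.
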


\proof
Let us compute $\mathcal{R}$ on $U\in H^1(\mathbb{R}^n)$, solution of the limit problem (\ref{eq:critico}). We have
\begin{eqnarray}
\mathcal{R}(U)&=&\int_{\mathbb{R}^n}|\triangledown U|^2- p_S|U|^{p_S+1}dx\nonumber\\
&=&(1-p_S)\int_{\mathbb{R}^n}|\triangledown U|^2dx<0\nonumber
\end{eqnarray}
since $p_S>1$. By definition (\ref{eq:inf}) this implies that $\lambda_1^*<0$.
To prove (ii) let us consider a sequence $w_n\in H^1(\mathbb{R}^n)$, with $\|w_n\|_{L^2(\mathbb{R}^n)}=1$, which minimizes (\ref{eq:inf}). It is easy to see that $w_n$ is bounded in $H^1(\mathbb{R}^n)$; therefore, up to a subsequence, it converges weakly to some $w\in H^1(\mathbb{R}^n)$ and strongly in $L^2(\{|x|\leq R\})$ for every $R>0$.\\
By the lower semicontinuity of the norm we have
\begin{eqnarray*}
\int_{\mathbb{R}^n}|\triangledown w|^2dx\leq \liminf_{n\shortrightarrow\infty}
\int_{\mathbb{R}^n}|\triangledown w_n|^2dx\quad\mbox{ and }\quad
\|w\|_{L^2(\mathbb{R}^n)}\leq 1.
\end{eqnarray*}
Moreover, for every $\epsilon>0$, we have
\begin{eqnarray*}
&&\Bigg|\int_{\mathbb{R}^n}U^{p_S-1}(w_n^2-w^2)dx\Bigg|
\leq\int_{\mathbb{R}^n}U^{p_S-1}|w_n^2-w^2|dx\\
&=&\int_{|x|\leq R}U^{p_S-1}|w_n^2-w^2|dx+\int_{|x|>R}U^{p_S-1}|w_n^2-w^2|dx\\
&\leq& c\int_{|x|\leq R}|w_n^2-w^2|dx+\frac{c}{R^4}\int_{|x|>R}|w_n^2-w^2|dx\\
&\leq& c\|w_n-w\|_{L^2(|x|\leq R)}+\frac{c}{R^4}\leq\epsilon
\end{eqnarray*}
where the last estimate is possible if we fix $R$ large enough and then we take $n$ sufficiently large. Thus
\begin{equation*}
\int_{\mathbb{R}^n}U^{p_S-1}w_n^2dx\shortrightarrow\int_{\mathbb{R}^n}U^{p_S-1}w^2dx,
\qquad\mbox{ as }\quad n\shortrightarrow\infty
\end{equation*}
and so
\begin{eqnarray*}
\mathcal{R}(w)&=&\int_{\mathbb{R}^n}|\triangledown w|^2- p_S|U|^{p_S-1}w^2dx\\
&\leq&\liminf_{n\shortrightarrow\infty}\int_{\mathbb{R}^n}|\triangledown w_n|^2- p_S|U|^{p_S-1}w_n^2dx=\lambda_1^*\,.
\end{eqnarray*}
This implies that $w\not\equiv 0$ and we can define:
\begin{equation*}
\widehat{w}=\frac{w}{\|w\|_{L^2(\mathbb{R}^n)}}\,.
\end{equation*}
If we assume now by contradiction that $\|w\|^2_{L^2(\mathbb{R}^n)}<1$, it follows that
\begin{equation}
\lambda_1^*\leq\mathcal{R}(\widehat{w})=\frac{\mathcal{R}(w)}{\|w\|^2_{L^2(\mathbb{R}^n)}}
\leq\frac{\lambda_1^*}{\|w\|^2_{L^2(\mathbb{R}^n)}}<\lambda_1^*,\label{eq:28}
\end{equation}
since $\lambda_1^*<0$. By (\ref{eq:28}) we deduce therefore that $\|w\|^2_{L^2(\mathbb{R}^n)}=1$ and so $w$ is a minimizer. This also allows us to deduce that $w_n$ strongly converges to $w$ in $L^2(\mathbb{R}^n)$, and this concludes the proof of (ii).
\endproof

\section{Asymptotic spectral analysis}\label{kdhfsklhfbvcbxvbxvcbx}

We  consider the linearized operator  at $u_p$, that is:
\begin{equation}\nonumber
L_p=-\triangle - p |u_p|^{p-1}I.
\end{equation}
We denote by $\lambda_{1,p}$ the first eigenvalue of $L_p$ in $\Omega$ and by $\varphi_{1,p}$ the corresponding positive eigenfunction such that $\varphi_{1,p}>0$ and $\|\varphi_{1,p}\|_{L^2(\Omega)}=1$. We have
\begin{equation}
-\triangle\varphi_{1,p}-p|u_p|^{p-1}\varphi_{1,p}=\lambda_{1,p}\varphi_{1,p}\qquad
\mbox{in }\Omega.\label{eq:varphi}
\end{equation}
Let us define $\widetilde\varphi_{1,p}$ by
\begin{equation}
\widetilde\varphi_{1,p}(x)=\Big(\frac{1}{\scriptstyle{ M_p^{\frac{p-1}{2}}}}\Big)^{\frac{n}{2}}\varphi_{1,p}\Big(a_p+\frac{x}{\scriptstyle{M_p^{\frac{p-1}{2}}}}\Big) \qquad\mbox{ in }\widetilde\Omega_p,\nonumber
\end{equation}
and $\widetilde\varphi_{1,p}=0$ outside $\widetilde\Omega_p$. It is easy to see that   $\|\widetilde\varphi_{1,p}\|_{L^2(\mathbb{R}^n)}=1$ and $\widetilde\varphi_{1,p}$ satisfies
\begin{equation}
-\triangle\widetilde\varphi_{1,p}-V_p\widetilde\varphi_{1,p}=\widetilde\lambda_{1,p}\widetilde\varphi_{1,p}\nonumber
\end{equation}
where
\begin{equation}
V_p(x)=p\frac{1}{M_p^{p-1}}\Big|u_p\Big(a_p+\frac{x}{\scriptstyle {M_p^{\frac{p-1}{2}}}}\Big)\Big|^{p-1}=p|\widetilde u_p(x)|^{p-1}\nonumber
\end{equation}
and
\begin{equation}
\widetilde\lambda_{1,p}=\frac{\lambda_{1,p}}{M_p^{p-1}}.\nonumber
\end{equation}
This means that $\widetilde\varphi_{1,p}$ is a first eigenfunction of the operator
\begin{equation}
\widetilde L_p=-\triangle -p|\widetilde u_p|^{p-1}I\nonumber
\end{equation}
 and $\widetilde\lambda_{1,p}$ is the corresponding first eigenvalue.
\begin{lemma}\label{lemma:bounded}
The set $\{\widetilde\varphi_{1,p},1<p<p_S\}$ is bounded in $H^1(\mathbb{R}^n)$.
\end{lemma}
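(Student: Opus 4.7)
The plan is to combine the eigenvalue identity with the $L^2$ normalization. Since $\|\widetilde\varphi_{1,p}\|_{L^2(\mathbb{R}^n)}=1$ is already part of the definition, the task reduces to producing a uniform upper bound on $\int_{\mathbb{R}^n}|\nabla\widetilde\varphi_{1,p}|^2\,dx$.

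First, I would multiply the eigenvalue equation $-\Delta\widetilde\varphi_{1,p}-V_p\widetilde\varphi_{1,p}=\widetilde\lambda_{1,p}\widetilde\varphi_{1,p}$ by $\widetilde\varphi_{1,p}$ and integrate by parts on $\widetilde\Omega_p$ (using the Dirichlet condition, so the zero extension to $\mathbb{R}^n$ has no boundary contribution), obtaining
\[
\int_{\mathbb{R}^n}|\nabla\widetilde\varphi_{1,p}|^2\,dx=\widetilde\lambda_{1,p}+\int_{\mathbb{R}^n}V_p\widetilde\varphi_{1,p}^2\,dx.
\]
The problem is therefore split into bounding the potential integral from above and bounding $\widetilde\lambda_{1,p}$ from above.

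The potential term is the easy one. By the very definition of $a_p$ and $M_p$ the rescaled solution satisfies $|\widetilde u_p|\leq 1$ on $\widetilde\Omega_p$, hence $0\leq V_p(x)=p|\widetilde u_p(x)|^{p-1}\leq p\leq p_S$ pointwise. Combined with $\|\widetilde\varphi_{1,p}\|_{L^2}=1$ this gives $\int_{\mathbb{R}^n}V_p\widetilde\varphi_{1,p}^2\,dx\leq p_S$, uniformly in $p$.

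For the spectral upper bound I would use the variational characterization
\[
\widetilde\lambda_{1,p}=\inf_{\psi\in H^1_0(\widetilde\Omega_p),\,\|\psi\|_{L^2}=1}\int_{\widetilde\Omega_p}\bigl(|\nabla\psi|^2-V_p\psi^2\bigr)\,dx,
\]
and exhibit a single admissible fixed test function. Fix any $\psi_0\in C_c^\infty(\mathbb{R}^n)$ with $\|\psi_0\|_{L^2(\mathbb{R}^n)}=1$. Since $M_p\to\infty$ by Lemma~\ref{lemma:hp}(iv), and since $\widetilde u_p\to U$ in $C^2_{loc}(\mathbb{R}^n)$ forces the scaled domains $\widetilde\Omega_p=M_p^{(p-1)/2}(\Omega-a_p)$ to exhaust $\mathbb{R}^n$, the support of $\psi_0$ is contained in $\widetilde\Omega_p$ for $p$ sufficiently close to $p_S$. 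Using $V_p\geq 0$ then yields $\widetilde\lambda_{1,p}\leq \int|\nabla\psi_0|^2\,dx=:C_0$, and combining with the previous step gives $\int|\nabla\widetilde\varphi_{1,p}|^2\,dx\leq C_0+p_S$, completing the $H^1$ bound.

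The only delicate point in this argument is the claim that $\widetilde\Omega_p$ eventually contains every compact subset of $\mathbb{R}^n$; this reduces to the statement that $\operatorname{dist}(a_p,\partial\Omega)$ stays bounded away from zero, which is implicit in the $C^2_{loc}$-convergence $\widetilde u_p\to U$ provided by the previous lemma (and known for concentrating solutions satisfying $(a)$, $(b)$). Everything else is a routine Rayleigh-quotient estimate, so I do not expect any substantial obstacle.
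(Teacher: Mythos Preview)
Your argument is correct, but it takes an unnecessary detour. You work to produce an upper bound for $\widetilde\lambda_{1,p}$ via a fixed compactly supported test function and the exhaustion of $\mathbb{R}^n$ by the rescaled domains; this introduces the ``delicate point'' you flag and, strictly speaking, only yields the bound for $p$ sufficiently close to $p_S$. The paper instead simply uses that $\lambda_{1,p}<0$: testing $u_p$ itself in the Rayleigh quotient of $L_p$ gives
\[
\int_\Omega|\nabla u_p|^2-p\int_\Omega|u_p|^{p+1}=(1-p)\int_\Omega|u_p|^{p+1}<0,
\]
so $\lambda_{1,p}<0$ and hence $\widetilde\lambda_{1,p}=\lambda_{1,p}/M_p^{p-1}<0$ for every $p$. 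Plugging this into the same eigenvalue identity you wrote down yields $\int_{\mathbb{R}^n}|\nabla\widetilde\varphi_{1,p}|^2\leq \int V_p\,\widetilde\varphi_{1,p}^2\leq p<p_S$ directly, with no test function and no need to know that $\widetilde\Omega_p$ exhausts $\mathbb{R}^n$. In short, your approach and the paper's share the same core identity and the same pointwise bound $V_p\leq p$; the paper replaces your variational upper bound on $\widetilde\lambda_{1,p}$ by the sharper and cost-free observation $\widetilde\lambda_{1,p}<0$, making the proof both shorter and uniform over the whole range $1<p<p_S$.
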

\proof
As we have already remarked $\|\widetilde\varphi_{1,p}\|_{L^2(\mathbb{R}^n)}=1$.
Moreover, since $\lambda_{1,p}<0$ and  and $p<p_S$, we get
\begin{eqnarray*}
\int_{\mathbb{R}^n}|\triangledown\widetilde\varphi_{1,p}(x)|^2dx&=&
\frac{1}{M_p^{p-1}}\int_{\widetilde\Omega_p}\bigg(\frac{1}{\scriptstyle{M_p^{\frac{p-1}{2}}}}\bigg)^n\bigg|\triangledown\varphi_{1,p}\bigg(a_p+\frac{x}{\scriptstyle{M_p^{\frac{p-1}{2}}}}\bigg)\bigg|^2dx\\
&=&\frac{1}{M_p^{p-1}}\int_{\Omega}|\triangledown\varphi_{1,p}(x)|^2dx\\
&=&\frac{1}{M_p^{p-1}}\int_{\Omega}p|u_p|^{p-1}\varphi_{1,p}^2dx+
\frac{\lambda_{1,p}}{M_p^{p-1}}\int_{\Omega}\varphi_{1,p}^2dx\\
&\leq&\int_{\Omega}p\bigg(\frac{|u_p|}{M_p}\bigg)^{p-1}\varphi_{1,p}^2dx\\
&\leq& p\int_{\Omega}\varphi_{1,p}^2dx<p_S,
\end{eqnarray*}
i.e. the assertion.
\endproof

\begin{theorem}\label{teo:lambda}
We have
\begin{equation}\label{gfgfgbvbvbncncnmxmx}
\widetilde\lambda_{1,p}\shortrightarrow \lambda_1^*\qquad\mbox{ as }\quad
p\shortrightarrow p_S.
\end{equation}
\end{theorem}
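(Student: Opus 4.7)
The plan is to establish the two matching bounds
\[
\limsup_{p\to p_S}\widetilde\lambda_{1,p}\le\lambda_1^*\le\liminf_{p\to p_S}\widetilde\lambda_{1,p},
\]
exploiting the variational characterization of both quantities together with the $C^2_{\mathrm{loc}}$-convergence $\widetilde u_p\to U$ and the uniform $H^1(\R^n)$-bound on $\widetilde\varphi_{1,p}$ from Lemma \ref{lemma:bounded}.

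For the upper bound I would use a truncation of the limit minimizer $\varphi_1^*$ (Remark \ref{kdhfsfhkdhkdhgkdhg}) as a test function. Choose $\eta_R\in C_c^\infty(\R^n)$ with $\eta_R\equiv 1$ on $B_R$, $\mathrm{supp}\,\eta_R\subset B_{2R}$, and put $\psi_R:=\eta_R\varphi_1^*$. Since $M_p\to+\infty$ (Lemma \ref{lemma:hp}(iv)) and $a_p\in\overline\Omega$, the rescaled domains $\widetilde\Omega_p$ exhaust $\R^n$, so for $p$ close enough to $p_S$ the function $\psi_R$ lies in $H^1_0(\widetilde\Omega_p)$ and is admissible in the Rayleigh quotient for $\widetilde\lambda_{1,p}$. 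The $C^2_{\mathrm{loc}}$-convergence yields $V_p\to p_S U^{p_S-1}$ uniformly on $B_{2R}$, so passing to the limit in the Rayleigh quotient at $\psi_R$ and then letting $R\to\infty$ via dominated convergence gives $\limsup_{p\to p_S}\widetilde\lambda_{1,p}\le\mathcal R(\varphi_1^*)=\lambda_1^*$.

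For the lower bound, by Lemma \ref{lemma:bounded} we may extract a subsequence with $\widetilde\varphi_{1,p}\rightharpoonup\varphi$ weakly in $H^1(\R^n)$ and in $L^{2^*}$, strongly in $L^2_{\mathrm{loc}}$ and almost everywhere, and $\|\varphi\|_{L^2(\R^n)}\le 1$ by weak lower semicontinuity. Lower semicontinuity of the Dirichlet integral together with the $L^1$-convergence
\[
\int_{\R^n}V_p\widetilde\varphi_{1,p}^2\,dx\;\longrightarrow\;\int_{\R^n}p_S U^{p_S-1}\varphi^2\,dx
\]
then yields $\liminf\widetilde\lambda_{1,p}\ge\mathcal R(\varphi)$. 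Using $\lambda_1^*<0$ (Proposition \ref{prop:spectral}(i)) and the homogeneity of $\mathcal R$, the bound $\|\varphi\|_{L^2}\le 1$ gives $\mathcal R(\varphi)\ge\|\varphi\|_{L^2}^2\lambda_1^*\ge\lambda_1^*$ when $\varphi\not\equiv 0$; the degenerate case $\varphi\equiv 0$ would make the potential integral vanish in the limit and force $\liminf\widetilde\lambda_{1,p}\ge 0$, contradicting the already-proved upper bound $\limsup\widetilde\lambda_{1,p}\le\lambda_1^*<0$.

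The main obstacle is proving the displayed $L^1$-convergence of the potential term, since the non-compactness of $\R^n$ allows in principle for mass loss at infinity. I would split the integral at a large radius $R$: on $B_R$ the convergence follows from uniform $V_p\to p_S U^{p_S-1}$ and strong $L^2_{\mathrm{loc}}$-convergence of $\widetilde\varphi_{1,p}$, while on $\R^n\setminus B_R$ Hölder gives
\[
\int_{|x|>R}V_p\widetilde\varphi_{1,p}^2\,dx\le\Bigl(\int_{|x|>R}V_p^{n/2}\,dx\Bigr)^{2/n}\|\widetilde\varphi_{1,p}\|_{L^{2^*}(\R^n)}^2,
\]
with the last factor uniformly bounded via Sobolev and Lemma \ref{lemma:bounded}. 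The crux is then a uniform tail estimate $\int_{|x|>R}V_p^{n/2}\,dx\to 0$ as $R\to\infty$; this reflects the scale-invariance of the exponent $\tfrac{n(p-1)}{2}$ (which equals $2^*$ at $p=p_S$), the concentration of $u_p^+$ at $a_p$ from Lemma \ref{lemma:hp}(ii), and the fact that condition (b) forces $\widetilde u_p^-\to 0$ uniformly in the rescaled variable, so that only the positive bubble $U$ (whose potential $U^{p_S-1}$ belongs to $L^{n/2}(\R^n)$) survives in the limit.
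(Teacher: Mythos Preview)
Your proposal is correct and follows the same two-inequality strategy as the paper: for the upper bound both you and the paper truncate $\varphi_1^*$ and use it as a test function in the Rayleigh quotient for $\widetilde\lambda_{1,p}$, and for the lower bound the technical heart in both arguments is exactly the uniform tail estimate on $\int_{|x|>R}V_p\,\widetilde\varphi_{1,p}^{\,2}\,dx$ via H\"older, Lemma~\ref{lemma:hp}(ii) on $\widetilde\Omega_p^+$, and condition~(b) on $\widetilde\Omega_p^-$. The only difference is in how the lower bound is packaged: the paper simply inserts $\widetilde\varphi_{1,p}$ (which already has $\|\widetilde\varphi_{1,p}\|_{L^2}=1$) as a competitor in the definition of $\lambda_1^*$, obtaining $\lambda_1^*\le\mathcal R(\widetilde\varphi_{1,p})=\widetilde\lambda_{1,p}+o(1)$ directly, whereas you pass through a weak limit $\varphi$ and then invoke lower semicontinuity together with $\lambda_1^*<0$ to close the argument. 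Both routes rest on the same estimate; the paper's avoids the extraction of a subsequence and the separate treatment of the case $\varphi\equiv 0$, at the cost of nothing.
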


\proof
We divide the proof in two steps:
\begin{description}
\item[Step 1.] We show that for $\epsilon>0$ we have
\begin{equation}
\lambda_1^*\leq \widetilde\lambda_{1,p}+\epsilon\quad\mbox{ for $p$ sufficiently close to $p_S$.}\label{eq:step1}
\end{equation}
\noindent By  (\ref{eq:inf}), we have $\lambda_1^*\leq\mathcal{R}(\widetilde\varphi_{1,p})$. Thus
\begin{eqnarray}
\lambda_1^*&\leq&\int_{\mathbb{R}^n}|\triangledown\widetilde\varphi_{1,p}|^2- p_S|U|^{p_S-1}\widetilde\varphi_{1,p}^2dx\nonumber\\
&=&\int_{\widetilde\Omega_p}|\triangledown\widetilde\varphi_{1,p}|^2-p|\widetilde u_p|^{p-1}\widetilde\varphi_{1,p}^2dx-
\int_{\widetilde\Omega_p} \big{(}p_S|U|^{p_S-1}-p|\widetilde u_p|^{p-1}\big)\widetilde\varphi_{1,p}^2dx\nonumber\\
&=&\widetilde\lambda_{1,p}-\int_{\widetilde\Omega_p} \big(p_S|U|^{p_S-1}-p|\widetilde u_p|^{p-1}\big)\widetilde\varphi_{1,p}^2dx\nonumber\\
&=&\widetilde\lambda_{1,p}-\int_{\widetilde\Omega_p\cap|x|\leq R} \big(p_S|U|^{p_S-1}-p|\widetilde u_p|^{p-1}\big)\widetilde\varphi_{1,p}^2dx+\nonumber\\
&&-\int_{\widetilde\Omega_p\cap|x|>R} \big(p_S|U|^{p_S-1}-p|\widetilde u_p|^{p-1}\big)\widetilde\varphi_{1,p}^2dx\nonumber
\end{eqnarray}
where $R>0$.
Let us first consider the last integral. We want to show that it can be made arbitrarily small. We have
\begin{eqnarray}
\Big|\int_{\widetilde\Omega_p\cap|x|>R}p_S|U|^{p_S-1}\widetilde\varphi_{1,p}^2dx\Big|
&\leq& p_S \int_{\widetilde\Omega_p\cap|x|>R}|U|^{p_S-1}\widetilde\varphi_{1,p}^2dx\nonumber\\
&\leq&\frac{C_1}{R^4}\int_{\mathbb{R}^n}\widetilde\varphi_{1,p}^2dx
\leq \frac{C_1}{R^4}\label{eq:15}
\end{eqnarray}
for some constant $C_1>0$. Therefore we can choose $R$ so large that
\begin{equation}\nonumber
\Big|\int_{\widetilde\Omega_p\cap|x|>R}p_S|U|^{p_S-1}\widetilde\varphi_{1,p}^2dx\Big|\leq\epsilon.
\end{equation}
%Let us now consider the first integral in (\ref{eq:5}), we have
To estimate the term
\begin{equation*}
\Big|\int_{\widetilde\Omega_p\cap|x|>R}p|\widetilde u_p|^{p-1}\widetilde\varphi_{1,p}^2dx\Big|
\end{equation*}
note that we can split the integral on $\widetilde\Omega_p$ in the integral on
\begin{equation}\label{eq:omega+}
\widetilde\Omega_p^+=\{x\in\widetilde\Omega_p: \widetilde u_p(x)\geq 0\}
\end{equation}
and the one on
\begin{equation}\label{eq:omega-}
\widetilde\Omega_p^-=\{x\in\widetilde\Omega_p: \widetilde u_p(x)< 0\}.
\end{equation}
\noindent Therefore  we get
\begin{eqnarray}
&&\Big|\int_{\widetilde\Omega_p\cap|x|>R}p|\widetilde u_p|^{p-1}\widetilde\varphi_{1,p}^2dx\Big|\leq\nonumber\\
&&\int_{\widetilde\Omega_p^+\cap|x|>R}p|\widetilde u_p|^{p-1}\widetilde\varphi_{1,p}^2dx+\int_{\widetilde\Omega^-_p\cap|x|>R}p|\widetilde u_p|^{p-1}\widetilde\varphi_{1,p}^2dx\label{eq:33}.
\end{eqnarray}
As for the first term of (\ref{eq:33}) we have
\begin{eqnarray}
&&\int_{\widetilde\Omega^+_p\cap|x|>R}p|\widetilde u_p|^{p-1}\widetilde\varphi_{1,p}^2dx\label{eq:7} \\
&&\leq p\Big(\int_{\widetilde\Omega^+_p\cap|x|>R}|\widetilde u_p|^{\frac{n(p-1)}{2}}dx\Big)^{\frac{2}{n}}
\Big(\int_{\widetilde\Omega^+_p\cap|x|>R}\widetilde\varphi_{1,p}^{\frac{2n}{n-2}}dx\Big)^{\frac{n-2}{n}}\nonumber\\
&&\leq p\Big(\int_{\widetilde\Omega^+_p\cap|x|>R}|\widetilde u_p|^{\frac{n(p-1)}{2}}dx\Big)^{\frac{2}{n}}
\|\widetilde\varphi_{1,p}\|_{L^{\frac{2n}{n-2}}(\mathbb{R}^n)}^2\nonumber\\
&&\leq C_2 \Big(\int_{\widetilde\Omega^+_p\cap|x|>R}|\widetilde u_p|^{\frac{n(p-1)}{2}}dx\Big)^{\frac{2}{n}}\nonumber
\end{eqnarray}
where we have used H\"{o}lder's inequality (with exponents $\frac{n}{2}$ and $\frac{n}{n-2}$) for the first estimate and the fact that, as a consequence of Lemma \ref{lemma:bounded},
$\widetilde\varphi_{1,p}$ is bounded in $L^{\frac{2n}{n-2}}(\mathbb{R}^n)$  to obtain the last inequality.\\
%Let us consider the following limit that will be useful soon after
%\begin{equation}\label{eq:limiteS}
%\int_{\widetilde\Omega_p}|\widetilde u_p|^{\frac{n(p-1)}{2}}dx\shortrightarrow S^{\frac{n}{2}}.
%\end{equation}
%Let us spend a few lines to prove it
%\begin{eqnarray*}
%\int_{\widetilde\Omega_p}|\widetilde u_p|^{\frac{n(p-1)}{2}}dx=
%\int_{\widetilde\Omega_p\cap \{x:\widetilde u_p(x)>0\}}|\widetilde u_p|^{\frac{n(p-1)}{2}}dx
%+\int_{\widetilde\Omega_p\cap \{x:\widetilde u_p(x)<0\}}|\widetilde u_p|^{\frac{n(p-1)}{2}}dx
%\end{eqnarray*}
%by $(ii)$ of Lemma \ref{lemma:hp}, we have
%\begin{equation*}
%\int_{\widetilde\Omega_p\cap \{x:\widetilde u_p(x)>0\}}|\widetilde u_p|^{\frac{n(p-1)}{2}}dx=\int_{\Omega}|u_p^+|^{\frac{n(p-1)}{2}}dx\shortrightarrow S^\frac{n}{2}\mbox{ as }p\shortrightarrow p_S
%\end{equation*}
%while
%\begin{equation*}
%\int_{\widetilde\Omega_p\cap \{x:\widetilde u_p(x)<0\}}|\widetilde %u_p|^{\frac{n(p-1)}{2}}dx\leq
%\Big(\frac{\|u^-_p\|_{L^\infty(\Omega)}}{\|u^+_p\|_{L^\infty(\Omega)}}\Big)^{\frac{n(p-1)}{2}}
%|supp(\widetilde u^-_p)|\shortrightarrow 0,
%\end{equation*}
%as $p\shortrightarrow p_S$.
%In fact, by assumption $(b)$ about our solutions (see pag.\pageref{hp}) we have
%$\frac{\|u^-_p\|_\infty}{\|u^+_p\|_{\infty}}\shortrightarrow 0$ as $p\shortrightarrow p_S$ and $|supp(\widetilde u^-_p)|$ can be made of measure as small as we like choosing $p$ close enough to $p_S$, as we stated in remark \ref{remark:positivity}. Thus we have (\ref{eq:limiteS}).\\
In order to estimate the last term in (\ref{eq:7}), we use (ii) of Lemma \ref{lemma:hp} to get
\begin{eqnarray*}
\int_{\widetilde\Omega^+_p\cap |x|\leq R}|\widetilde u_p|^{\frac{n(p-1)}{2}}dx&+&
\int_{\widetilde\Omega^+_p\cap |x|>R}|\widetilde u_p|^{\frac{n(p-1)}{2}}dx=\\
=\int_{\widetilde\Omega^+_p}|\widetilde u_p|^{\frac{n(p-1)}{2}}dx&\xrightarrow{p\shortrightarrow p_S}& S^{\frac{n}{2}}=\int_{\mathbb{R}^n}|U|^{\frac{2n}{n-2}}dx=\\
\int_{|x|\leq R}|U|^{\frac{2n}{n-2}}dx&+&
\int_{|x|>R}|U|^{\frac{2n}{n-2}}dx
\end{eqnarray*}
As $\widetilde u_p\xrightarrow{p\shortrightarrow p_S} U$ in $C^2_{loc}(\mathbb{R}^n)$, we have
\begin{equation*}
\int_{\widetilde\Omega^+_p\cap |x|\leq R}|\widetilde u_p|^{\frac{n(p-1)}{2}}dx\xrightarrow{p\shortrightarrow p_S}\int_{|x|\leq R}|U|^{\frac{2n}{n-2}}dx
\end{equation*}
and so
\begin{equation}
\int_{\widetilde\Omega^+_p\cap |x|> R}|\widetilde u_p|^{\frac{n(p-1)}{2}}dx\xrightarrow{p\shortrightarrow p_S}\int_{|x|> R}|U|^{\frac{2n}{n-2}}dx\label{eq:4}
\end{equation}
but, as $U\in L^{\frac{2n}{n-2}}(\mathbb{R}^n)$, the term on the right hand side of (\ref{eq:4}) can be made as small as we like, choosing $R$ sufficiently large. Thus we have that, chosen $R$ large enough, we can take $p$ sufficiently close to $p_S$ so that
\begin{equation}
\int_{\widetilde\Omega^+_p\cap |x|> R}|\widetilde u_p|^{\frac{n(p-1)}{2}}dx\leq\epsilon\label{eq:8}.
\end{equation}
Let us now estimate the second term of (\ref{eq:33})
\begin{eqnarray}
&&\int_{\widetilde\Omega^-_p\cap|x|>R}p|\widetilde u_p|^{p-1}\widetilde\varphi_{1,p}^2dx\nonumber\\
&&\leq p\Bigg(\frac{\|u^-_p\|_{L^\infty(\Omega)}}{\|u^+_p\|_{L^\infty(\Omega)}}\Bigg)^{p-1}
\Bigg(\int_{\widetilde\Omega^-_p\cap|x|>R}\widetilde\varphi_{1,p}^2dx\Bigg)\nonumber\\
&&\leq p\Bigg(\frac{\|u^-_p\|_{L^\infty(\Omega)}}{\|u^+_p\|_{L^\infty(\Omega)}}\Bigg)^{p-1}
\xrightarrow{p\shortrightarrow p_S} 0\nonumber
\end{eqnarray}
where we used the fact that $\|\widetilde\varphi_{1,p}\|_{L^2(\mathbb{R}^n)}=1$ and
 condition (b) satisfied by our solutions.\\
Recalling that $\widetilde u_p\xrightarrow{p\shortrightarrow p_S} U$ in $C^2_{loc}(\mathbb{R}^n)$, for $R$ fixed as above and $p$ sufficiently close to $p_S$, we have
\begin{equation}
\int_{\widetilde\Omega_p\cap|x|\leq R} \big(p_S|U|^{p_S-1}-p|\widetilde u_p|^{p-1}\big)\widetilde\varphi_{1,p}^2dx\leq\epsilon.\label{eq:2}
\end{equation}
Thus (\ref{eq:step1}) follows from (\ref{eq:15})-(\ref{eq:2}).

\item[Step 2.]
Now we show that for $\epsilon>0$ we have
\begin{equation}
\widetilde\lambda_{1,p}\leq\lambda_1^* +\epsilon\quad \mbox{ for $p$ sufficiently close to $p_S$.}\label{eq:step2}
\end{equation}
Let us consider a regular cut-off  function $\psi_R(x)=\psi_R(r)$, for $R>0$, such that
\begin{itemize}
\item[-] $0\leq\psi_R\leq 1$ and $\psi_R(r)=1$ for $r\leq R$, $\psi_R(r)=0$
for $r\geq 2R$,
\item[-] $|\triangledown \psi_R|\leq \frac{2}{R}$
\end{itemize}

\

and let us set
\begin{equation*}
w_R:=\frac{\psi_R\varphi_1^*}{\|\psi_R\varphi_1^*\|_{L^2(\mathbb{R}^n)}}.
\end{equation*}
Thus
\begin{eqnarray}
\widetilde\lambda_{1,p}&\leq& \int_{\mathbb{R}^n}|\triangledown w_R|^2-
p|\widetilde u_p|^{p-1}w_R^2dx\label{eq:11}\\
&=&\int_{\mathbb{R}^n}|\triangledown w_R|^2-p_S|U|^{p_S-1}w_R^2dx\nonumber\\
&+&\int_{\mathbb{R}^n} (p_S|U|^{p_S-1}-p|\widetilde u_p|^{p-1})w_R^2dx.\nonumber
\end{eqnarray}
It is easy to see that $w_R\shortrightarrow\varphi_1^*$ in $H^1(\mathbb{R}^n)$ as $R\shortrightarrow\infty$. Therefore, by (\ref{eq:inf}), we have that given $\epsilon>0$ we can fix $R>0$ such that
\begin{equation}\nonumber
\int_{\mathbb{R}^n}|\triangledown w_R|^2-p_S|U|^{p_S-1}w_R^2dx\leq \lambda_1^*+\epsilon.
\end{equation}
For such a fixed value of $R$, arguing as in Step 1, we obtain that
\begin{equation}
\int_{\mathbb{R}^n} (p_S|U|^{p_S-1}-p|\widetilde u_p|^{p-1})w_R^2dx\leq\epsilon\label{eq:10}
\end{equation}
for $p$ close enough to $p_S$. Then (\ref{eq:step2}) follows from (\ref{eq:11})-(\ref{eq:10}).
\\
\noindent
By \eqref{eq:step1} and \eqref{eq:step2} we deduce \eqref{gfgfgbvbvbncncnmxmx}.
\end{description}
\endproof
\begin{cor}
$\widetilde\varphi_{1,p}$ strongly converges to $\varphi_1^*$ in $L^2(\mathbb{R}^n)$.
\end{cor}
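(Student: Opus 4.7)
My plan is to exhibit $\widetilde\varphi_{1,p}$ as a minimizing sequence for the variational problem \eqref{eq:inf} and then invoke Proposition \ref{prop:spectral}(ii) together with the uniqueness of the positive minimizer $\varphi_1^*$ recorded in Remark \ref{kdhfsfhkdhkdhgkdhg}.

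First I would compute $\mathcal{R}(\widetilde\varphi_{1,p})$. Since $\widetilde\varphi_{1,p}$ is extended by zero outside $\widetilde\Omega_p$, one has
\begin{equation*}
\mathcal{R}(\widetilde\varphi_{1,p})=\int_{\widetilde\Omega_p}|\nabla\widetilde\varphi_{1,p}|^2-p|\widetilde u_p|^{p-1}\widetilde\varphi_{1,p}^2\,dx+\int_{\widetilde\Omega_p}\bigl(p|\widetilde u_p|^{p-1}-p_S|U|^{p_S-1}\bigr)\widetilde\varphi_{1,p}^2\,dx=\widetilde\lambda_{1,p}+I_p,
\end{equation*}
where $I_p$ is exactly the error term estimated in Step 1 of Theorem \ref{teo:lambda}. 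That analysis, which used $C^2_{loc}$ convergence $\widetilde u_p \to U$ on $|x|\le R$, the decay of $|U|^{p_S-1}$ at infinity, H\"older's inequality with Lemma \ref{lemma:hp}(ii), and hypothesis $(b)$ on the outer piece, shows $I_p\to 0$ as $p\to p_S$. Combined with $\widetilde\lambda_{1,p}\to\lambda_1^*$ from Theorem \ref{teo:lambda}, this yields $\mathcal{R}(\widetilde\varphi_{1,p})\to\lambda_1^*$. Since also $\|\widetilde\varphi_{1,p}\|_{L^2(\mathbb{R}^n)}=1$, the sequence $\widetilde\varphi_{1,p}$ is a minimizing sequence for \eqref{eq:inf}.

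Next I would apply Proposition \ref{prop:spectral}(ii): up to a subsequence, $\widetilde\varphi_{1,p}\to w$ strongly in $L^2(\mathbb{R}^n)$, with $\|w\|_{L^2(\mathbb{R}^n)}=1$ and $\mathcal{R}(w)=\lambda_1^*$, so $w$ is a minimizer of \eqref{eq:inf}. Because $\varphi_{1,p}>0$ in $\Omega$, also $\widetilde\varphi_{1,p}\ge 0$, hence $w\ge 0$. By Remark \ref{kdhfsfhkdhkdhgkdhg} the positive minimizer of \eqref{eq:inf} with unit $L^2$ norm is unique, so $w=\varphi_1^*$.

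Finally, this argument applies to any subsequence of $\widetilde\varphi_{1,p}$: every subsequence contains a further subsequence converging in $L^2(\mathbb{R}^n)$ to $\varphi_1^*$, whence the whole family converges, $\widetilde\varphi_{1,p}\to\varphi_1^*$ strongly in $L^2(\mathbb{R}^n)$. The only nontrivial ingredient is verifying $I_p\to 0$, but this is nothing more than a rerun of the estimates already performed in Theorem \ref{teo:lambda}, so no new obstacle arises.
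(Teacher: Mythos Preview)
Your proposal is correct and follows essentially the same route as the paper: show that $\widetilde\varphi_{1,p}$ is a minimizing sequence for \eqref{eq:inf} by writing $\mathcal{R}(\widetilde\varphi_{1,p})=\widetilde\lambda_{1,p}+I_p$ with $I_p\to 0$ via the Step~1 estimates of Theorem~\ref{teo:lambda}, then apply Proposition~\ref{prop:spectral}(ii) and the uniqueness in Remark~\ref{kdhfsfhkdhkdhgkdhg}. Your write-up is in fact more complete than the paper's, which leaves the identification of the limit and the passage from subsequential to full convergence implicit.
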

\proof
By the definition of $\widetilde\lambda_{1,p}$, and what is stated in Theorem \ref{teo:lambda}, we have
\begin{equation}\nonumber
\int_{\widetilde\Omega_{p}}|\triangledown \widetilde\varphi_{1,p}|^2- p|U|^{p-1} \widetilde\varphi_{1,p}^2dx=\widetilde\lambda_{1,p}\shortrightarrow\lambda_1^*
\qquad\mbox{ as }p\shortrightarrow p_S.
\end{equation}
This implies that $\widetilde\varphi_{1,p}$ is a minimizing sequence for (\ref{eq:inf}), and so the assertion follows by Proposition \ref{prop:spectral} (see also Remark \ref{kdhfsfhkdhkdhgkdhg}).
\endproof

\section{Proof of Theorem \ref{teo:integrale}}\label{kfkhfkshfkshfhsfhsfhksasasa}
We now proceed proving Theorem \ref{teo:integrale}.
\begin{proof}[Proof of Theorem \ref{teo:integrale}]
Using $\varphi_{1,p}\in H_0^1(\Omega)$ as a test function in (\ref{eq:probl}) we have
\begin{equation}
\int_{\Omega}\triangledown u_p\cdot\triangledown\varphi_{1,p}dx=
\int_{\Omega} |u_p|^{p-1}u_p \varphi_{1,p}dx,\label{eq:12}
\end{equation}
while using $u_p$ as a test function in (\ref{eq:varphi}) we obtain
\begin{equation}
\int_{\Omega}\triangledown u_p\cdot\triangledown\varphi_{1,p}dx=
\int_{\Omega} p|u_p|^{p-1}u_p \varphi_{1,p}dx+\lambda_{1,p}\int_{\Omega}u_p \varphi_{1,p}dx.\label{eq:13}
\end{equation}
Subtracting (\ref{eq:12}) from (\ref{eq:13}) we get
\begin{equation*}
-\frac{p-1}{\lambda_{1,p}}\int_{\Omega}|u_p|^{p-1}u_p \varphi_{1,p}dx=\int_{\Omega}u_p \varphi_{1,p}dx.
\end{equation*}
Taking into account that $\lambda_{1,p}$ is negative, we have that, to determine the sign of $\int_{\Omega}u_p \varphi_{1,p}dx$,
we can study the sign of
\begin{equation}
\int_{\Omega}|u_p|^{p-1}u_p \varphi_{1,p}dx.\label{eq:21}
\end{equation}
For convenience we consider
\begin{equation*}
M_p^{(\frac{p-1}{2})\frac{n}{2}-p}\int_{\Omega}|u_p|^{p-1}u_p \varphi_{1,p}dx
\end{equation*}
which has the same sign of (\ref{eq:21}).
Now we prove that
\begin{equation}\label{eq:limite}
\begin{split}
M_p^{(\frac{p-1}{2})\frac{n}{2}-p}\int_{\Omega}|u_p|^{p-1}u_p \varphi_{1,p}dx&\overset{p\shortrightarrow p_S}{\longrightarrow}\int_{\mathbb{R}^n}|U|^{p_S-1}U\varphi_1^*dx\\
&\quad=\int_{\mathbb{R}^n}U^{p_S}\varphi_1^*dx\,.
\end{split}
\end{equation}
Since the term on the right hand side of (\ref{eq:limite}) is positive, this will leads to the assertion of Theorem \ref{teo:integrale}.\\
By a simple change of variables it follows that:
\begin{equation}\label{eq:16}
\begin{split}
&\Bigg|M_p^{(\frac{p-1}{2})\frac{n}{2}-p}\int_{\Omega}|u_p|^{p-1}u_p\varphi_{1,p}dx-\int_{\mathbb{R}^n}|U|^{p_S-1}U\varphi_1^*dx\Bigg|\\
&=\Bigg|\int_{\widetilde\Omega_p}|\widetilde u_p|^{p-1}\widetilde u_p\widetilde\varphi_{1,p}dx-
\int_{\mathbb{R}^n}|U|^{p_S-1}U\varphi_1^*dx\Bigg|.
\end{split}
\end{equation}
We take $\epsilon>0$ and choose $R>0$ such that
\begin{equation}\nonumber
\int_{|x|> R}|U|^{p_S-1}U\varphi_1^*dx=
\int_{|x|> R}U^{p_S}\varphi_1^*dx\leq\epsilon,
\end{equation}
this is possible arguing as we did  in the proof of (\ref{eq:15}).\\
We rewrite (\ref{eq:16}) in the following way
\begin{eqnarray}
&&\Bigg|\int_{\widetilde\Omega_p\cap|x|>R}|\widetilde u_p|^{p-1}\widetilde u_p\widetilde\varphi_{1,p}dx+
\int_{\widetilde\Omega_p\cap|x|\leq R}|\widetilde u_p|^{p-1}\widetilde u_p\widetilde\varphi_{1,p}dx\nonumber\\
&-&\int_{|x|\leq R}|U|^{p_S-1}U\varphi_1^*dx
-\int_{|x|>R}|U|^{p_S-1}U\varphi_1^*dx\Bigg|\nonumber\\
&\leq&\Bigg|\int_{\widetilde\Omega_p\cap|x|>R}|\widetilde u_p|^{p-1}\widetilde u_p\widetilde\varphi_{1,p}dx\Bigg|
+\Bigg|\int_{|x|>R}|U|^{p_S-1}U\varphi_1^*dx\Bigg|\nonumber\\
&+&\Bigg|\int_{\widetilde\Omega_p\cap|x|\leq R}|\widetilde u_p|^{p-1}\widetilde u_p\widetilde\varphi_{1,p}dx
-\int_{|x|\leq R}|U|^{p_S-1}U\varphi_1^*dx\Bigg|\,.\nonumber
\end{eqnarray}
Now we analyze each term in the previous inequality. 
 Splitting the integral on $\widetilde\Omega_p^+$ and on $\widetilde\Omega_p^-$ (see (\ref{eq:omega+}) and  (\ref{eq:omega-}) for the definitions of such sets) we have
\begin{eqnarray}
&&\Bigg|\int_{\widetilde\Omega_p\cap|x|>R}|\widetilde u_p|^{p-1}\widetilde u_p\widetilde\varphi_{1,p}dx\Bigg|\label{eq:35}\\
&&\leq \int_{\widetilde\Omega^+_p\cap|x|>R}|\widetilde u_p|^{p}\widetilde\varphi_{1,p}dx+
\int_{\widetilde\Omega^-_p\cap|x|>R}|\widetilde u_p|^{p}\widetilde\varphi_{1,p}dx.\nonumber
\end{eqnarray}
As for the first term of (\ref{eq:35}) we have
\begin{eqnarray}
\int_{\widetilde\Omega^+_p\cap|x|>R}|\widetilde u_p|^{p}\widetilde\varphi_{1,p}dx
&\leq& \Big(\int_{\widetilde\Omega^+_p\cap|x|>R}|\widetilde u_p|^{\frac{2np}{n+2}}dx\Big)^{\frac{n+2}{2n}}
\Big(\int_{\widetilde\Omega^+_p\cap|x|>R}\widetilde\varphi_{1,p}^{\frac{2n}{n-2}}dx\Big)^{\frac{n-2}{2n}}\nonumber\\
&\leq& \Big(\int_{\widetilde\Omega^+_p\cap|x|>R}|\widetilde u_p|^{\frac{2np}{n+2}}dx\Big)^{\frac{n+2}{2n}}
\|\widetilde\varphi_{1,p}\|_{L^{\frac{2n}{n-2}}(\mathbb{R}^n)}\nonumber\\
&\leq& C_4 \Big(\int_{\widetilde\Omega^+_p\cap|x|>R}|\widetilde u_p|^{\frac{2np}{n+2}}dx\Big)^{\frac{n+2}{2n}}\nonumber
\end{eqnarray}
where we have used H\"{o}lder's inequality (with exponents $\frac{2n}{n+2}$ and $\frac{2n}{n-2}$) for the first estimate and the fact that, as a consequence of Lemma \ref{lemma:bounded}, $\widetilde\varphi_{1,p}$ is bounded in $L^{\frac{2n}{n-2}}(\mathbb{R}^n)$.\\
Thus, with the same argument used to obtain (\ref{eq:8}), we can state that, for every $\epsilon >0$, having chosen $R$ large enough and taking $p$ close enough to $p_S$, we have
\begin{equation}\nonumber
C_4 \Big(\int_{\widetilde\Omega^+_p\cap|x|>R}|\widetilde u_p|^{\frac{2np}{n+2}}dx\Big)^{\frac{n+2}{2n}}<\epsilon.
\end{equation}
Next we estimate the second term of (\ref{eq:35}). We have
\begin{eqnarray}
&&\int_{\widetilde\Omega^-_p\cap|x|>R}|\widetilde u_p|^{p}\widetilde\varphi_{1,p}dx\nonumber \\
&&\leq\Big(\int_{\widetilde\Omega^-_p\cap|x|>R}|\widetilde u_p|^{2p}dx\Big)^{\frac{1}{2}}\Big(\int_{\widetilde\Omega^-_p\cap|x|>R}\widetilde\varphi_{1,p}^2dx\Big)^{\frac{1}{2}}\nonumber\\
&&=\Big(\int_{\widetilde\Omega^-_p\cap|x|>R}|\widetilde u_p|^{2p-\frac{2n}{n-2}}|\widetilde u_p|^{\frac{2n}{n-2}}dx\Big)^{\frac{1}{2}}\Big(\int_{\widetilde\Omega^-_p\cap|x|>R}\widetilde\varphi_{1,p}^2dx\Big)^{\frac{1}{2}}\nonumber\\
&&\leq\Big(\frac{\|u^-_p\|_{L^\infty(\Omega)}}{\|u^+_p\|_{L^\infty(\Omega)}}\Big)^{p-\frac{n}{n-2}}
\Big(\int_{\widetilde\Omega^-_p\cap|x|>R}|\widetilde u_p|^{\frac{2n}{n-2}}dx\Big)^{\frac{1}{2}}\nonumber\\
&&\leq C_5\Big(\frac{\|u^-_p\|_{L^\infty(\Omega)}}{\|u^+_p\|_{L^\infty(\Omega)}}\Big)^{p-\frac{n}{n-2}}
\xrightarrow{p\shortrightarrow p_S} 0\nonumber
\end{eqnarray}
where we have used H\"{o}lder's inequality (with exponent 2) for the first estimate, the fact that $\|\widetilde\varphi_{1,p}\|_{L^2(\mathbb{R}^n)}=1$ for the second and condition (b) satisfied by our solution. Note in particular that, for $p$ close to $p_S$, we may and do assume that $p>\frac{n}{n-2}$.

%using H\"{o}lder's inequality (with exponents $\frac{2n}{n+2}$ and $\frac{2n}{n-2}$), having chosen $R$ large enough and then taking $p$ sufficiently close to $p_S$, we have the following estimates
%\begin{eqnarray}
%\int_{\widetilde\Omega_p\cap|x|>R}|\widetilde u_p(x)|^{p}\widetilde\varphi_{1,p}dx
%&\leq&\Bigg(\int_{\widetilde\Omega_p\cap|x|>R}\Big(|\widetilde u_p(x)|^{p}\Big)^{\frac{2n}{n+2}}dx\Biggl)^{\frac{n+2}{2n}}
%\Bigg(\int_{\mathbb{R}^n}\widetilde\varphi_{1,p}^{\frac{2n}{n-2}}dx\Bigg)^{\frac{n-2}{2n}}\nonumber\\
%&\leq& C_3 \Bigg(\int_{\widetilde\Omega_p\cap|x|>R}\Big(|\widetilde u_p(x)|^{p}\Big)^{\frac{2n}{n+2}}dx\Biggl)^{\frac{n+2}{2n}}<\epsilon\label{eq:25}
%\end{eqnarray}
%where the last estimates is justified by the same argument introduced to obtain (\ref{eq:8}).
\noindent Moreover, recalling once again that $\widetilde u_p\xrightarrow{p\shortrightarrow p_S} U$ in $C^2_{loc}(\mathbb{R}^n)$, we deduce that:
\begin{eqnarray}
\Bigg|\int_{\widetilde\Omega_p\cap|x|\leq R}|\widetilde u_p|^{p-1}\widetilde u_p\widetilde\varphi_{1,p}dx-\int_{|x|\leq R}|U|^{p_S-1}U\varphi_1^*dx\Bigg|<\epsilon\label{eq:24},
\end{eqnarray}
for $R$ fixed as above and $p$ sufficiently close to $p_S$.
 
\noindent Finally, for $R$ sufficiently large, the term $$\int_{|x|>R}|U|^{p_S-1}U\varphi_1^*dx$$ can be made arbitrary small  since  $U\in L^{\frac{2n}{n-2}}(\mathbb{R}^n)$ and $\varphi_1^*$ is bounded.

Thus (\ref{eq:16})-(\ref{eq:24}) and the arbitrary choice  of $\epsilon$ imply (\ref{eq:limite}) concluding the proof.\\
%(\ref{eq:18}), (\ref{eq:36}), (\ref{eq:37}) and (\ref{eq:24}) together with (\ref{eq:16}) and (\ref{eq:26a}) imply (\ref{eq:limite}).\\
\end{proof}

\pagestyle{plain}
\addcontentsline{toc}{chapter}{References}

\end{document}